\newtheorem*{corollary*}{Corollary}
\newtheorem{theorem}{Theorem}[section]
\newtheorem*{theoremp*}{Theorem}
\newtheorem{corollary}[theorem]{Corollary}
\newtheorem{lemma}[theorem]{Lemma}
\newtheorem{proposition}[theorem]{Proposition}
\newtheorem*{claim*}{Claim}
\theoremstyle{definition}
\newtheorem{definition}[theorem]{Definition}
\newtheorem{example}[theorem]{Example}
\theoremstyle{remark}
\numberwithin{equation}{theorem}
\renewcommand*\env@matrix[1][\
arraystretch]{%
  \edef\arraystretch{#1}%
  \hskip -\arraycolsep
  \let\@ifnextchar\new@ifnextchar
  \array{*\c@MaxMatrixCols c}}
\begin{document}

\title{Upper bounds for dominant dimensions of gendo-symmetric algebras}
\date{\today}

\subjclass[2010]{Primary 16G10, 16E10}

\keywords{dominant dimension, gendo-symmetric algebras, Nakayama conjecture}

\author{Ren\'{e} Marczinzik}
\address{Institute of algebra and number theory, University of Stuttgart, Pfaffenwaldring 57, 70569 Stuttgart, Germany}
\email{marczire@mathematik.uni-stuttgart.de}

\begin{abstract}
The famous Nakayama conjecture states that the dominant dimension of a non-selfinjective finite dimensional algebra is finite.
In \cite{Yam}, Yamagata stated the stronger conjecture that the dominant dimension of a non-selfinjective finite dimensional algebra is bounded by a function depending on the number of simple modules of that algebra. With a view towards those conjectures, new bounds on dominant dimensions seem desirable.
We give a new approach to bounds on the dominant dimension of gendo-symmetric algebras via counting non-isomorphic indecomposable summands of rigid modules in the module category of those algebras. On the other hand, by Mueller's theorem, the calculation of dominant dimensions is directly related to the calculation of certain Ext-groups.
Motivated by this connection, we generalize a theorem of Tachikawa about non-vanishing of $Ext^{1}(M,M)$ for a non-projective module $M$ in group algebras of $p$-groups to local Hopf algebras and we also give new results for showing the non-vanishing of $Ext^{1}(M,M)$ for certain modules in other local selfinjective algebras, which specializes to show that blocks of category $\mathcal{O}$ and 1-quasi-hereditary algebras with a special duality have dominant dimension exactly 2. In the final section we raise different questions with the hope of a new developement on those conjectures in the future.
\end{abstract}

\maketitle
\section*{Introduction}
We always assume that we work with finite dimensional, connected and non-semisimple algebras over a field $K$ and all modules are finite dimensional right modules, if nothing is stated otherwise.
Recall that the famous Nakayama conjecture (see \cite{Nak}) states that the dominant dimension of any non-selfinjective algebra is finite. A special case of this conjecture is the Tachikawa conjecture, which states that $Ext^{i}(M,M) \neq 0$ for some $i \geq 1$ for any non-projective module in a selfinjective algebra.
The connection between those conjectures is, for example, explained in \cite{Yam}. The most important class of selfinjective algebras are the symmetric algebras, which includes for example all group algebras. In this special situation the truth of the Tachikawa conjecture for symmetric algebras is equivalent to the truth of the Nakayama conjecture for gendo-symmetric algebras, where gendo-symmetric algebras were first defined in \cite{FanKoe} as algebras isomorphic to endomorphism rings of generators over symmetric algebras. Motivated by this we seek to find upper bounds of the dominant dimension and develop new tools to attack the Nakayama conjecture for gendo-symmetric algebras in section 2, while section 1 gives the preliminaries.
For an $A$-module $M$ we call the number of nonisomorphic indecomposable direct summands the size of $M$ and denote it by size($M$). For a subcategory $C$ of mod-$A$ we say $C \perp_r C$ iff $Ext^{k}(X,Y)=0$, for all $X,Y \in C$ and $k=1,2,...,r$.
We define $o_k(A):=sup \{ $ size(M) $|$ $M \in mod-A$, add($M$) $\perp_k$ add($M$) $ \}$.
Our first main result is as follows:
\begin{theoremp*}
(see \ref{maintheorem})
Let $A$ be a non-selfinjective gendo-symmetric algebra and let $w$ denote the number of simple $A$-modules.
Then $(o_k(A)+2-w)(k+2)-1 \geq domdim(A)$, for all $k \geq 1$.
\end{theoremp*}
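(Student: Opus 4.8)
The plan is to use Mueller's theorem to transport the computation of $\operatorname{domdim}(A)$ to a vanishing condition for $\Ext$-groups over the underlying symmetric algebra, and then to manufacture a large rigid $A$-module from iterated cosyzygies of the defining generator. Write $A \cong \End_B(M)$ with $B$ symmetric and $M$ a generator (hence a generator-cogenerator, since $B$ is self-injective), and recall that the indecomposable summands of $M$ correspond to the simple $A$-modules, so $\mathrm{size}(M) = w$. Put $d = \operatorname{domdim}(A)$. By Mueller's theorem $\Ext^i_B(M,M) = 0$ for $1 \le i \le d-2$, while $\Ext^{d-1}_B(M,M) \ne 0$. Since $A$ is non-selfinjective, $d$ is finite and $M$ has at least one non-projective indecomposable summand.

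Set $t = \lfloor d/(k+2)\rfloor$ and consider the $B$-module $N_B = \bigoplus_{a=0}^{t-1}\Omega^{-a(k+2)}(M)$, built from cosyzygies of $M$ spaced $k+2$ apart. First I would check that $N_B$ is $k$-rigid. Because $B$ is self-injective, $\Omega^{-1}$ is a stable autoequivalence and, for $i \ge 1$, one has $\Ext^i_B(\Omega^{-u}M, \Omega^{-v}M) \cong \Ext^{i+(v-u)}_B(M,M)$ whenever $i + (v-u) \ge 1$, and $\cong \Ext^{(u-v)-i}_B(M,M)$ in the remaining range, using $\Ext^i_B(X,Y)\cong \underline{\Hom}_B(\Omega^i X, Y)$ together with the symmetry $\underline{\Hom}_B(\Omega^{-s}M, M)\cong \underline{\Hom}_B(M, \Omega^s M)$. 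For $1 \le i \le k$ and $u,v$ among the used multiples of $k+2$, every index that occurs lies strictly between $0$ and $d-1$ precisely because $(t-1)(k+2)+k = t(k+2)-2 \le d-2$; hence all these groups vanish and $N_B$ is $k$-rigid.

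Next I would transfer rigidity to $A$ through $F = \Hom_B(M,-)$. Since $M$ is a generator, $F$ is fully faithful, carries $\add(M)$ onto the projective $A$-modules (with $FM = A$), and preserves indecomposability and isomorphism classes, so $\mathrm{size}(FN_B) = \mathrm{size}(N_B)$. Using the standard comparison of $\Ext$-groups along $F$ valid in degrees up to $\operatorname{domdim}(A)-2$ (a consequence of the theory recalled in Section~1, Mueller's theorem being its instance for $M$ itself), the vanishing $\Ext^i_B(\Omega^{-a(k+2)}M,\Omega^{-b(k+2)}M)=0$ for $1\le i\le k$ yields $\Ext^i_A(F\Omega^{-a(k+2)}M, F\Omega^{-b(k+2)}M)=0$ in the same range; note this is exactly where the argument forces the spacing $k+2$ and the ceiling on $t$. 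Thus $\add(FN_B)\perp_k\add(FN_B)$, so $FN_B$ is an admissible module in the definition of $o_k(A)$.

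It remains to count. The level $a=0$ already contributes the $w$ indecomposable summands of $FM=A$, and I would then argue that each of the $t-1$ higher levels contributes at least one further indecomposable summand, so that $o_k(A)\ge \mathrm{size}(FN_B)\ge w+(t-1)=w+\lfloor d/(k+2)\rfloor-1$; feeding this into $(o_k(A)+2-w)(k+2)-1$ and using $\lfloor d/(k+2)\rfloor+1\ge (d+1)/(k+2)$ gives the claimed inequality, the small cases $d<k+2$ being covered by $N=FM=A$ alone. The main obstacle is precisely this counting step: a priori a cosyzygy $\Omega^{-a(k+2)}X$ of a summand $X$ of $M$ can be isomorphic to another summand, so distinctness of summands across levels is not automatic. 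I expect to control this by relating any such coincidence to $\Omega$-periodicity of a summand of $M$, whose period is constrained by the finiteness of the dominant dimension, i.e.\ by the non-vanishing of $\Ext^{d-1}_B(M,M)$ furnished by Mueller's theorem; equivalently, one replaces the fixed module $N_B$ by a strictly increasing chain of $k$-rigid modules and shows that a failure to gain a new summand at some level forces the cosyzygy pattern to stabilise and hence bounds $d$ from above directly. Establishing this cleanly, together with pinning down the exact degree range in the $\Ext$-comparison for $F$, is where the real work lies.
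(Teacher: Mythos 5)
Your plan is, in essence, the paper's construction transported across the Morita--Tachikawa correspondence: a direct sum of (co)syzygies spaced $k+2$ apart, rigidity in that range, then a count of summands. The numerics at the end do match Theorem \ref{maintheorem}. But the two steps you yourself defer as ``where the real work lies'' are not technicalities --- they are the entire content of the proof, and the counting step is a genuine gap in the form you state it. Over the selfinjective algebra $B$ there is no invariant forcing the levels $\Omega^{-a(k+2)}(M)$ to produce new indecomposable summands: non-projective $B$-modules can be $\Omega$-periodic. For instance, with $B=K[x]/(x^n)$, $n\geq 3$, and $M=B\oplus S$ ($S$ the simple module, which has $\Omega$-period $2$), every level $\Omega^{-a(k+2)}(M)$ for even $k$ reproduces exactly the summands of $M$, so $\operatorname{size}$ never grows. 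In such situations the theorem survives only because periodicity separately forces non-vanishing of self-extensions (if $\Omega^{p}X\cong X$ with $X$ non-projective then $\Ext^{p}_B(X,X)\cong\underline{\Hom}_B(X,X)\neq 0$, since the identity of $X$ cannot factor through a projective), which bounds $\phi_M$ and hence $\operatorname{domdim}(A)$ directly. So your inequality $\operatorname{size}(FN_B)\geq w+(t-1)$ is not automatic, and the argument has to be restructured into a dichotomy --- ``either each level contributes a new summand, or a coincidence occurs and then $d$ is bounded outright'' --- which you only gesture at. Likewise, the ``standard comparison of $\Ext$-groups along $F=\Hom_B(M,-)$'' is not an off-the-shelf fact: it holds only under vanishing hypotheses (such as $\Ext^{j}_B(M,\Omega^{-a(k+2)}M)=0$ in a suitable range) that must themselves be verified for precisely the modules you use.

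The paper sidesteps both problems by never leaving mod-$A$. It takes $\bigoplus_{l=0}^{q}\Omega^{(k+2)l}(D(A))$, proves $k$-rigidity inside mod-$A$ using Lemma \ref{lemma von iyama} together with the Fang--Koenig criterion (Theorem \ref{lemma domdim}), so no $\Ext$-transfer along $F$ is ever needed; and, crucially, it counts with the \emph{injective dimension}: since the codominant dimension of $D(A)$ equals $\operatorname{domdim}(A)$, the syzygy $\Omega^{j}(D(A))$ has finite injective dimension exactly $j$ in the relevant range, so the partial sums $\bigoplus_{l=0}^{t}\Omega^{(k+2)l}(D(A))$ have strictly increasing injective dimension and each new level is \emph{forced} to contribute at least one new indecomposable summand. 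This invariant is exactly what is unavailable on the $B$-side, where every non-projective module has infinite injective dimension --- which is why your periodicity obstruction cannot be argued away there without substantial additional work. To complete your version you would, in effect, have to pull your modules back to mod-$A$ and redo this injective-dimension count, at which point you have reproduced the paper's proof.
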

As a corollary of this theorem, we obtain a generalisation of the known fact that the Nakayama conjecture is true for gendo-symmetric algebras with representation-dimension at most 3 and some other special cases. We also note that at the moment there seems to be no known example of a gendo-symmetric algebra $A$ with $o_k(A)$ infinite for all $k \geq 1$ and thus the previous theorem might be seen as a new approach to prove Tachikawa's conjecture for symmetric algebras. In the rest of section 2, we give similar bounds for the  here newly introduced class of 1-Extsymmetric algebras, which generalize the class of weakly 2-Calabi-Yau algebras. Note that the class of weakly 2-Calabi-Yau algebras contains the class of preprojective algebras of Dynkin type. See for example section 5 in \cite{CheKoe} for more background on this.
Another motivation for our previous theorem is a conjecture by Yamagata, which is stronger than the Nakayama conjecture. In \cite{Yam}, Yamagata conjectured that the dominant dimension of any non-selfinjective algebra is bounded by a finite function depending on the number of simple modules of that algebra. Our inequality in the previous theorem suggests to attack this conjecture also by calculating $o_k(A)$ for certain $k$ and hoping that one obtains something, which depends only on the number of simple modules of $A$. We show that this really works if in addition we assume that $A$ is a Nakayama algebra.
By an old result of Mueller (see \cite{Mue}) the dominant dimension of an algebra $A$ with dominant dimension at least two and minimal faithful projective injective left $A$-module $Af$ (for some idempotent $f$) can be calculated by calculating $\inf \{ i \geq 1 | Ext_{fAf}^{i}(fA,fA) \neq 0 \}+1$ inside $mod-fAf$. This motivates to study the non-vanishing of certain Ext-groups in the module category of a given algebra, which we do in section 3. Tachikawa succeded to show that $Ext^{1}(M,M) \neq 0$ for any non-projective module in a group algebra of a $p$-group. Note that every group algebra of a $p$-group is a local Hopf algebra. We generalise Tachikawa's result to a general local Hopf algebra and also provide an example of a local Hopf algebra, which is not isomorphic to a group algebra of a p-group to show that our result is really a generalisation. We state the theorem here in a slightly different but equivalent form compared to the main text: 
\begin{theoremp*}
(see \ref{tachtheo})
Let $A$ be a local Hopf algebra and $M$ an arbitrary non-projective $A$-module, then $Ext^{1}(M,M) \neq 0$.
\end{theoremp*}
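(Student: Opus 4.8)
The plan is to reduce the self-extension group $\Ext^1(M,M)$ to a cohomology group whose left entry is the trivial module, and then to exploit that $A$ is local. First I would record the structural facts. Being a finite dimensional Hopf algebra, $A$ is a Frobenius algebra by the Larson--Sweedler theorem, hence self-injective, so that projective and injective modules coincide. Being in addition local, $A$ has a unique simple module; since the counit $\epsilon\colon A\to K$ is a surjective algebra homomorphism, its kernel is a maximal two-sided ideal of codimension one, hence contained in and therefore equal to $\rad(A)$, so $A/\rad(A)\cong K$ and the unique simple module is the one dimensional trivial module $K$. I would also use that $\operatorname{mod}\text{-}A$ is a rigid tensor category: $M\otimes_K N$ carries the diagonal action through the comultiplication, $M^{*}=\Hom_K(M,K)$ carries the action twisted by the antipode, and $K$ is the tensor unit.

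The second step is the homological tensor identity. Using the adjunction $\Hom_A(X\otimes M,Y)\cong\Hom_A(X,Y\otimes M^{*})$ coming from the rigid structure, together with the fact that $-\otimes M$ is exact and preserves projectives (for a Hopf algebra $A\otimes_K V$ is free for every module $V$), I would obtain the natural isomorphism
\[
\Ext^1_A(M,M)\;\cong\;\Ext^1_A\bigl(K,\,M\otimes M^{*}\bigr).
\]
The crucial point, and the step I expect to be the main obstacle, is transferring non-projectivity across this isomorphism: I claim $M\otimes M^{*}$ is non-projective whenever $M$ is. Here I would invoke rigidity in the form of the zig-zag identity $(\mathrm{id}_M\otimes\mathrm{ev})\circ(\mathrm{coev}\otimes\mathrm{id}_M)=\mathrm{id}_M$, which exhibits $M$ as a direct summand of $(M\otimes M^{*})\otimes M$. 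If $M\otimes M^{*}$ were projective, then $(M\otimes M^{*})\otimes M$ would be projective (tensoring a projective with any module over a Hopf algebra yields a projective), and its direct summand $M$ would be projective as well, contrary to assumption. This is exactly where the genuine Hopf structure, as opposed to mere self-injectivity, is indispensable; for a general local self-injective algebra the conclusion of the theorem is false.

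Finally I would prove the elementary detection lemma that makes locality count: over a local self-injective algebra, every non-projective module $N$ satisfies $\Ext^1_A(K,N)\neq 0$. Indeed, if $\Ext^1_A(K,N)=0$, then, since $K$ is the only simple module, a d\'{e}vissage along composition series gives $\Ext^1_A(X,N)=0$ for every module $X$, so $N$ is injective and hence, by self-injectivity, projective --- a contradiction. Applying this lemma to $N=M\otimes M^{*}$, which is non-projective by the previous step, yields $\Ext^1_A(K,\,M\otimes M^{*})\neq 0$, and the tensor identity then gives $\Ext^1_A(M,M)\neq 0$, as desired.
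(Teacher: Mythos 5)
Your proof is correct and takes essentially the same route as the paper: reduce $\Ext^1_A(M,M)$ to $\Ext^1_A(K, M\otimes_K M^{*})$ via the Hopf tensor--hom adjunction, transfer non-projectivity to $M\otimes_K M^{*}$ (which the paper quotes as a lemma and you reprove via the zig-zag identity), and then use locality together with self-injectivity to detect non-vanishing. The only cosmetic difference lies in that last detection step --- the paper notes that the first cosyzygy term of the minimal injective coresolution contains $A$, whose socle is the unique simple $K$, as a direct summand, while you argue by d\'{e}vissage over composition series plus the Baer criterion --- and both arguments are valid.
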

Further results in section 3 include that $Ext_A^{1}(M,M) \neq 0$ for twosided ideals $M$ over a local symmetric algebra $A$, which gives as a corollary that 1-quasi-hereditary algebras with a special duality (in the sense of theorem B in \cite{Pu}) and blocks of category $\mathcal{O}$ have dominant dimension exactly two. The last section asks several questions motivated by the statements in this work.
The author is thankful to Xingting Wang for suggesting the example in \ref{hopfexample} and Steffen Koenig for helpful comments.

\section{Preliminaries}
In this article we always assume that $A$ is a finite dimensional and connected algebra over a field $K$. To avoid trivialities, we assume that $A$ is not semisimple. We always work with finite dimensional right modules, if not stated otherwise. $mod-A$ denotes the category of finite dimensional right $A$-modules.
$D:=Hom_K(-,K)$ denotes the $K$-duality of an algebra $A$ over the field $K$.
For background on representation theory of finite dimensional algebras and their homological algebra, we refer to \cite{ASS}, \cite{SkoYam} and \cite{ARS}.
For a module $M$, $add(M)$ denotes the full subcategory of $mod-A$ consisting of direct summands of $M^n$ for some $n \geq 1$.
A module $M$ is called basic in case $M \cong M_1 \oplus M_2 \oplus ... \oplus M_n$, where every $M_i$ is indecomposable and $M_i$ is not isomorphic to $M_j$ for $i \neq j$. The basic version of a module $N$ is the unique (up to isomorphim) module $M$ such that $add(M)=add(N)$ and such that $M$ is basic.
We denote by $S_i=e_iA/e_iJ$, $P_i=e_i A$ and $I_i=D(Ae_i)$  the simple, indecomposable projective and indecomposable injective module, respectively, corresponding to the primitive idempotent $e_i$. \newline
The dominant dimension domdim($M$) of a module $M$ with a minimal injective resolution $(I_i): 0 \rightarrow M \rightarrow I_0 \rightarrow I_1 \rightarrow ...$ is defined as: \newline
domdim($M$):=$\sup \{ n | I_i $ is projective for $i=0,1,...,n \}$+1, if $I_0$ is projective, and \newline domdim($M$):=0, if $I_0$ is not projective. \newline
The codominant dimension of a module $M$ is defined as the dominant dimension of the $A^{op}$-module $D(M)$.
The dominant dimension of a finite dimensional algebra is defined as the dominant dimension of the regular module. It can be shown that the dominant dimension of an algebra always equals the dominant dimension of the opposite algebra, see for example \cite{Ta}.
So domdim($A$)$ \geq 1$ means that the injective hull of the regular module $A$ is projective or equivalently, that there exists an idempotent $e$ such that $eA$ is a minimal faithful projective-injective module.
Unless otherwise stated, $e$ without an index will always denote the idempotent such that $eA$ is the minimal faithful injective-projective $A$-module in case $A$ has dominant dimension at least one.
Algebras with dominant dimension larger than or equal to 1 are called QF-3 algebras.
All Nakayama algebras are QF-3 algebras (see \cite{Abr}, Proposition 4.2.2 and Propositon 4.3.3).
For more information on dominant dimensions and QF-3 algebras, we refer to \cite{Ta}.
\begin{definition}
$A$ is called a Morita algebra iff it has dominant dimension larger than or equal to 2 and $D(Ae) \cong eA$ as $A$-right modules. This is equivalent to $A$ being isomorphic to $End_B(M)$, where $B$ is a selfinjective algebra and $M$ a generator of mod-$B$ and in this case $B=eAe$ and $M=D(eA)$ (see \cite{KerYam}).
$A$ is called a gendo-symmetric algebra iff it has dominant dimension larger than or equal to 2 and $D(Ae) \cong eA$ as $(eAe,A)-$bimodules iff it has dominant dimension larger than or equal to 2 and $D(eA) \cong Ae$ as $(A,eAe)$-bimodules. This is equivalent to $A$ being isomorphic to $End_B(M)$, where $B$ is a symmetric algebra and $M$ a generator of mod-$B$ and in this case $B=eAe$ and $M=Ae$ (see \cite{FanKoe}).
\end{definition}
We assume the reader to be familiar with Nakayama algebras. See chapter 32 in \cite{AnFul} or chapter 5 in \cite{ASS} for more on this topic and \cite{Mar} for the calculation of minimal projective resolutions and minimal injective coresolutions for Nakayama algebras. We just give here some conventions.
Let $A$ be a finite dimensional connected Nakayama algebra given by quiver and relations for the rest of this paragraph. Thus their quiver is a directed line or a directed circle. We choose to number the points in the quiver by $0,1,...,n-1$ in a clockwise way in case the algebra has $n$ simple modules.
In this case, the algebra is uniquely determined by the sequence $c=(c_0,c_1,...,c_{n-1})$ (see \cite{AnFul}, Theorem 32.9.), where $c_i$ denotes the dimension of the indecomposable projective module $e_iA$ and $n$ is the number of simple modules. The sequence $(c_0,c_1,...,c_{n-1})$ is called the Kupisch series of $A$. We look at the indices of the $c_i$ always modulo $n$. Thus $c_i$ is defined for every $i \in \mathbb{Z}$.
Every indecomposable module over a Nakayama algebra is isomorphic to $e_iA/e_iJ^k$ for some $k \in \{1,2,...,c_i\}$. For $k=c_i$, one gets exactly the indecomposable projective modules.
Recall the following definitions from \cite{Mar}:
\begin{definition}
For a finite dimensional algebra $A$ and a module $M$ we define $\phi_M$ as \newline $\phi_M:= \inf \{ r \geq 1 | Ext_{A}^{r}(M,M)\neq 0 \}$ with the convention $\inf(\emptyset)= \infty$.
We also define $\Delta_A:= \sup \{ \phi_M | M$ is a nonprojective generator-cogenerator $\}$.
Clearly, for a nonselfinjective algebra $A$, it holds that $\Delta_A=  \inf \{ r \geq 1 | Ext_{A}^{r}(D(A),A) \neq 0 \}$ and for a selfinjective algebra $A$ we get that $\Delta_A=  \sup \{ \phi_M | M$ is a non-projective A-module$ \}$.
\end{definition}
One reason of interest in this Definition is the following theorem of Mueller and the connection to dominant dimension:
\begin{theorem}
(see \cite{Yam} or \cite{Mue})
Let $B=End_A(M)$ for a generator-cogenerator $M$ of $mod-A$, then $domdim(B)=\phi_M+1$.
\end{theorem}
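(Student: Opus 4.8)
The plan is to exploit the functor $F=\Hom_A(M,-)\colon\text{mod-}A\to\text{mod-}B$ together with its interaction with the idempotent of $B$ cutting out $A$. First I would fix a decomposition in which $A$ is a direct summand of $M$ (harmless, since replacing $M$ by $M\oplus A\oplus D(A)$ changes neither $\add(M)$, nor $B$ up to Morita equivalence, nor $\phi_M$, using that $M$ is a generator-cogenerator). Writing endomorphisms on the left makes $M$ a $(B,A)$-bimodule, and the projection onto the chosen copy of $A$ is an idempotent $e\in B$ with $eBe\cong A$, $Be\cong M$ as right $A$-modules, and $eB\cong\Hom_A(M,A)$. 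I would then record the two homological facts on which everything rests: (i) $F$ restricts to an equivalence $\add(M)\xrightarrow{\ \sim\ }\proj\text{-}B$ whose quasi-inverse is the \emph{exact} Schur functor $(-)e=\Hom_B(eB,-)$, and under this equivalence the injective $A$-modules correspond precisely to the projective-injective $B$-modules (for one direction $F(D(A))\cong D(M)=D(Be)$ is injective since $Be$ is projective over $B$, and projective since $D(A)\in\add(M)$; the converse uses that any projective-injective $J$ is a summand of $F(D(A))^n$, so $Je$ is a summand of $D(A)^n$); and (ii) if $0\to M\to I^0\to I^1\to\cdots$ is an injective coresolution of $M$ over $A$, then $F$ sends it to a complex $0\to B\to F(I^0)\to F(I^1)\to\cdots$ of projective-injective $B$-modules whose cohomology in degree $i$ is exactly $\Ext_A^i(M,M)$ (this is $\Ext$ computed from an injective coresolution of the second argument), with $B=F(M)$ in degree zero.

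The second ingredient is the elementary criterion that $\operatorname{domdim}(B)\ge m$ as soon as there is an exact sequence $0\to B\to X^0\to\cdots\to X^{m-1}$ with all $X^i$ projective-injective; this holds because the minimal injective coresolution of $B$ is a direct summand, as a complex, of any injective coresolution, so projectivity of the $X^i$ forces projectivity of the first $m$ terms of the minimal one. For the lower bound $\operatorname{domdim}(B)\ge\phi_M+1$ I would feed the complex from (ii) into this criterion: by definition of $\phi_M$ we have $\Ext_A^i(M,M)=0$ for $1\le i\le\phi_M-1$, so $0\to B\to F(I^0)\to\cdots\to F(I^{\phi_M})$ is exact at every node up to $F(I^{\phi_M-1})$ while its $\phi_M+1$ terms $F(I^0),\dots,F(I^{\phi_M})$ are all projective-injective; the criterion then yields $\operatorname{domdim}(B)\ge\phi_M+1$, the failure of exactness at the last node $F(I^{\phi_M})$ caused by $\Ext_A^{\phi_M}(M,M)\neq 0$ being permitted.

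For the upper bound I would argue by contradiction, assuming $\operatorname{domdim}(B)\ge\phi_M+2$. Then the first $\phi_M+2$ terms $J^0,\dots,J^{\phi_M+1}$ of the minimal injective coresolution $0\to B\to J^0\to J^1\to\cdots$ are projective-injective, hence $J^i=F(\tilde I^i)$ for injective $A$-modules $\tilde I^i$ in that range. Applying the exact functor $(-)e$ to this exact coresolution produces an exact sequence $0\to M\to\tilde I^0\to\cdots\to\tilde I^{\phi_M+1}\to\cdots$, the beginning of an honest injective coresolution of $M$ over $A$. Reapplying $F$ recovers the complex $0\to B\to J^0\to J^1\to\cdots$, which is exact, while using $\tilde I^\bullet$ to compute $\Ext$ identifies its cohomology in degree $i$ with $\Ext_A^i(M,M)$ for $i\le\phi_M$. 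Exactness thus forces $\Ext_A^{\phi_M}(M,M)=0$, contradicting the definition of $\phi_M$; hence $\operatorname{domdim}(B)\le\phi_M+1$, and with the lower bound $\operatorname{domdim}(B)=\phi_M+1$. When $\phi_M=\infty$ the lower-bound argument applies for every $m$ and gives $\operatorname{domdim}(B)=\infty$, matching the convention.

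The index bookkeeping is routine; the genuine content, and the step I would be most careful about, is the package of structural identifications in (i): that $F$ interchanges injective $A$-modules with projective-injective $B$-modules, that $(-)e$ is exact and quasi-inverse to $F$ on projectives, and the isomorphisms $Be\cong M$ and $F(D(A))\cong D(M)$. Once these are secured, both inequalities are short — the lower bound by reading $F$ of an injective coresolution of $M$ as a projective-injective coresolution of $B$, and the upper bound by pulling the minimal injective coresolution of $B$ back to $A$ and observing that exactness there translates into vanishing of the relevant self-extensions of $M$.
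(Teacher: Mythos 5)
Your proposal is correct, but there is nothing in the paper to compare it against: the paper does not prove this theorem at all, it simply quotes it with a reference to Mueller and Yamagata. What you have written is essentially a reconstruction of the classical proof from those sources: the equivalence $\Hom_A(M,-)\colon\add(M)\to\proj\text{-}B$ with exact quasi-inverse $(-)e$, the matching of injective $A$-modules with projective-injective $B$-modules, and the two inequalities obtained by pushing an injective coresolution of $M$ forward and pulling the minimal injective coresolution of $B$ back. The only assertion you leave unjustified is that every projective-injective $B$-module is a summand of some $F(D(A))^n$; this does hold, because $F(D(A))\cong D(Be)$ is faithful (as ${}_BM\cong Be$ is a faithful left $B$-module), so $B$ embeds in a finite direct sum of copies of $F(D(A))$, and a projective-injective $J$ is a summand of $B$, whence the resulting embedding $J\hookrightarrow F(D(A))^n$ splits--worth making explicit, but not a gap in substance.
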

By the previous theorem, it is equivalent to give upper bounds for $\Delta_A$ and upper bounds on the dominant dimensions of algebras isomorphic to $End_A(M)$, where $M$ is a generator-cogenerator of $mod-A$.
We also assume the reader to be familiar with the theory of finite dimensional Hopf algebras, which is explained in detail for example in \cite{SkoYam}, chapter VI.

\section{Upper bounds for the dominant dimension of gendo-symmetric algebras}
\subsection{General upper bounds}
In this chapter we give bounds for the dominant dimension of a gendo-symmetric algebra $A$ over a field $K$ depending on other invariants of the algebra, which we now introduce.
For an $A$-module $M$ we call the number of nonisomorphic indecomposable direct summands the size of $M$ and denote it by size($M$). For a subcategory $C$ of mod-$A$ we say $C \perp_r C$ iff $Ext^{k}(X,Y)=0$, for all $X,Y \in C$ and $k=1,2,...,r$.
For a module $M$, B($M$) denotes the basic version of the module $M$.
\begin{definition}
\label{blabla}
We define $o_k(A):=sup \{ $ size(M) $|$ $M \in mod-A$, add(M) $\perp_k$ add(M) $ \}$.
Note that $o_k(A)$ is always larger than or equal to the number of simple A-modules, since the size of the regular module is equal to the number of simple A-modules.
A module $M$ is called $k$-rigid, if add($M$) $\perp_k$ add($M$). 
\end{definition}

For the next lemma, see for example \cite{Iya2} section 2.1.
\begin{lemma}
\label{lemma von iyama}
The following holds in the stable category for a module $X$ with $X \perp_n add(A)$ and an arbitrary module $Y \in mod-A$ for $n \geq 1$:
\begin{enumerate}
\item[a)]$\Omega^n: \underline{Hom}(X,Y) \rightarrow \underline{Hom}(\Omega^{n}(X),\Omega^n{(Y)})$ is a k-isomorphism.
\item[b)]We have a functorial isomorphism $\underline{Hom}(\Omega^{n}(X),Y)=Ext^{n}(X,Y)$.
\end{enumerate}
\end{lemma}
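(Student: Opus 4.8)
The plan is to treat part (b) as a dimension-shifting computation and part (a) as a diagram chase in the stable category, reducing the latter to the case $n=1$ by induction. I read the ``$k$-isomorphism'' of part (a) as an isomorphism of $K$-vector spaces. Throughout I fix a minimal projective resolution $\cdots \to P_1 \to P_0 \to X \to 0$, giving short exact sequences $0 \to \Omega^n X \to P_{n-1} \to \Omega^{n-1}X \to 0$, so that each $\Omega^i X$ with $i \geq 1$ has no projective summands.

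For (b), I would apply $\Hom(-,Y)$ to $0 \to \Omega^n X \to P_{n-1} \to \Omega^{n-1}X \to 0$. Since $\Ext^1(P_{n-1}, Y)=0$, the long exact sequence gives $\Ext^1(\Omega^{n-1}X, Y) \cong \Hom(\Omega^n X, Y)/U$, where $U$ is the subspace of maps $\Omega^n X \to Y$ that extend along the inclusion $\Omega^n X \hookrightarrow P_{n-1}$; together with the standard dimension shift $\Ext^n(X,Y) \cong \Ext^1(\Omega^{n-1}X,Y)$ this identifies the left-hand side with $\Ext^n(X,Y)$. It then remains to identify $U$ with the space $\mathcal{P}$ of maps factoring through an arbitrary projective, so that the quotient becomes $\underline{\Hom}(\Omega^n X, Y)$. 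The inclusion $U \subseteq \mathcal{P}$ is immediate since $P_{n-1}$ is projective. For the reverse inclusion, given $f = g h$ with $h\colon \Omega^n X \to Q$ and $g\colon Q \to Y$ for a projective $Q$, I would invoke the hypothesis $\Ext^n(X,A)=0$, hence $\Ext^1(\Omega^{n-1}X, Q)\cong \Ext^n(X,Q)=0$, to extend $h$ along $\Omega^n X \hookrightarrow P_{n-1}$; then $f$ extends as well, so $f \in U$. This is precisely where the orthogonality assumption enters.

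For (a), I would first reduce to $n=1$: writing $\Omega^n = \Omega \circ \Omega^{n-1}$ and noting that $\Omega^i X \perp_1 \add(A)$ holds for $0 \le i \le n-1$ (because $\Ext^1(\Omega^i X, A) \cong \Ext^{i+1}(X,A)=0$ under $X \perp_n \add(A)$), the general case follows by composing the single-syzygy isomorphisms. For $n=1$, the functor $\Omega$ sends $f\colon X \to Y$ to the map $\Omega f\colon \Omega X \to \Omega Y$ induced by any lift of $f$ to projective resolutions; this is well defined on $\underline{\Hom}$ since homotopic lifts induce maps on syzygies that differ by one factoring through a projective. Injectivity and surjectivity of $\Omega\colon \underline{\Hom}(X,Y) \to \underline{\Hom}(\Omega X, \Omega Y)$ are then diagram chases in
\[
\begin{array}{ccccccccc}
0 &\to& \Omega X &\to& P_X &\to& X &\to& 0\\
&& \downarrow && \downarrow && \downarrow &&\\
0 &\to& \Omega Y &\to& P_Y &\to& Y &\to& 0,
\end{array}
\]
in both of which the hypothesis $\Ext^1(X,A)=0$ is used to extend a map from $\Omega X$ into a projective along $\Omega X \hookrightarrow P_X$. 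Concretely, for surjectivity I would extend a given $g\colon \Omega X \to \Omega Y$ to some $G\colon P_X \to P_Y$, take the induced map $X \to Y$, and verify that its syzygy equals $g$; for injectivity I would assume $\Omega f$ factors through a projective and subtract off the corresponding extension to conclude that $f$ itself factors through $P_Y$.

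The long exact sequences and the dimension shift are routine. I expect the main obstacle to be the two bookkeeping identifications: in (b), checking carefully that a map factoring through an \emph{arbitrary} projective already extends along the \emph{specific} syzygy inclusion, so that $U=\mathcal{P}$; and in (a), verifying that the syzygy of the reconstructed map agrees with $g$ on the nose in the stable category rather than only up to an uncontrolled projective contribution. Both reduce to the single extension property guaranteed by $X \perp_n \add(A)$, so once that is isolated the remaining verifications are formal.
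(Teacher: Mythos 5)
Your proof plan is correct, but note that the paper itself offers no argument for this lemma at all: its ``proof'' is a citation to Iyama (\cite{Iya2}, Section 2.1), where these statements are standard. What you have written is essentially a self-contained reconstruction of that standard argument, and the details check out. In (b), the long exact sequence for $0 \to \Omega^n X \to P_{n-1} \to \Omega^{n-1}X \to 0$ together with dimension shifting identifies $\Ext^n(X,Y)$ with $\Hom(\Omega^n X,Y)/U$, and your identification $U=\mathcal{P}$ is exactly the point where the hypothesis is needed: a factorization $\Omega^n X \to Q \to Y$ through an arbitrary projective $Q$ extends along $\Omega^n X \hookrightarrow P_{n-1}$ because the obstruction lives in $\Ext^1(\Omega^{n-1}X,Q)\cong \Ext^n(X,Q)=0$. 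In (a), your reduction to $n=1$ is legitimate since $\Ext^1(\Omega^i X,A)\cong \Ext^{i+1}(X,A)=0$ for $0\le i\le n-1$, and both diagram chases work: for surjectivity the composite $\Omega X \to \Omega Y \hookrightarrow P_Y$ extends to $P_X \to P_Y$ because $\Ext^1(X,P_Y)=0$, and the induced map on cokernels has syzygy equal to the given map (different lifts change the syzygy only by a map factoring through $P_X$); for injectivity, extending the map $\Omega X \to Q$ to $P_X$ and subtracting shows that $f$ itself factors through $P_Y$. The trade-off between the two routes is the obvious one: the paper's citation keeps the exposition short and places the burden on the reference, while your argument makes the lemma self-contained and, usefully, isolates the single mechanism by which $X \perp_n \add(A)$ enters --- namely, that maps from syzygies of $X$ into projectives always extend along the syzygy inclusions. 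The only point you leave implicit is the functoriality in $Y$ claimed in (b), but this is immediate since all maps in your construction are induced by composition and connecting homomorphisms, hence natural.
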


\begin{theorem}
\label{theoremiyama}
\begin{enumerate}
\item[a)]If mod-A has a maximal 1-orthogonal module M, then $o_1(A)$ equals size(M).
\item[b)]If the representation dimension of A is smaller than or equal to 3, then $o_1(A)$ is finite.
\end{enumerate}
\end{theorem}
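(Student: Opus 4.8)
The plan is to handle both parts at once by reducing to a single situation: a generator-cogenerator $V$ with $\operatorname{gldim}\End_A(V)\le 3$, and write $\Gamma=\End_A(V)$. In part (b) this is immediate, since assuming that the representation dimension of $A$ is at most $3$ means, by definition, that such a $V$ exists. In part (a) I would first note that a maximal $1$-orthogonal module $M$ is automatically of this kind. Writing the defining property as $\add(M)=\{X:\Ext^1(M,X)=0\}=\{X:\Ext^1(X,M)=0\}$, every projective lies in the second set and every injective in the first, so $M$ is a generator-cogenerator; moreover $M\in\add(M)$ forces $\Ext^1(M,M)=0$, so $M$ is $1$-rigid and hence $o_1(A)\ge\mathrm{size}(M)$. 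Finally, by Iyama's theory of maximal orthogonal subcategories one has $\operatorname{gldim}\End_A(M)\le 3$, placing us in the reduced situation with $V=M$.

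The core step, which yields both the finiteness in (b) and the upper bound $o_1(A)\le\mathrm{size}(M)$ in (a), is the following: for a generator-cogenerator $V$ with $\operatorname{gldim}\Gamma\le 3$, every $1$-rigid module $N$ satisfies $\mathrm{size}(N)\le\mathrm{size}(V)$. I would prove this through the functor $F=\Hom_A(V,-)\colon \mathrm{mod}\text{-}A\to\mathrm{mod}\text{-}\Gamma$. Since $V$ is a generator, $F$ is fully faithful, hence preserves indecomposability and isomorphism classes, so $\mathrm{size}(N)=\mathrm{size}(FN)$; moreover $F$ identifies $\add(V)$ with $\proj\Gamma$, so $\mathrm{size}(V)$ equals the number of simple $\Gamma$-modules. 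The claim thus reduces to bounding $\mathrm{size}(FN)$ by the number of simples of $\Gamma$.

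To bound $\mathrm{size}(FN)$ I would show that $FN$ is a partial tilting $\Gamma$-module and then invoke Bongartz's completion lemma, which realizes any partial tilting module as a summand of a tilting module having exactly $\#\{\text{simple }\Gamma\text{-modules}\}$ indecomposable summands. First, the hypothesis $\operatorname{gldim}\Gamma\le 3$ translates, via the standard Auslander characterization of representation dimension (respectively the defining property of a maximal $1$-orthogonal subcategory), into the existence for $N$ of a short exact sequence $0\to K\to V_0\to N\to 0$ with $K,V_0\in\add(V)$ that remains exact under $F$; its image $0\to FK\to FV_0\to FN\to 0$ is then a projective $\Gamma$-resolution, so $\mathrm{pd}_\Gamma FN\le 1$ and $\Ext^{\ge 2}_\Gamma(FN,-)=0$. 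Second, applying $\Hom_\Gamma(-,FN)$ to this resolution and using full faithfulness to replace $\Hom_\Gamma(FV_0,FN)$ and $\Hom_\Gamma(FK,FN)$ by $\Hom_A(V_0,N)$ and $\Hom_A(K,N)$, I obtain $\Ext^1_\Gamma(FN,FN)\cong\operatorname{coker}\big(\Hom_A(V_0,N)\to\Hom_A(K,N)\big)$. Applying $\Hom_A(-,N)$ to $0\to K\to V_0\to N\to 0$ identifies this cokernel with a submodule of $\Ext^1_A(N,N)$, which vanishes because $N$ is $1$-rigid. Hence $\Ext^1_\Gamma(FN,FN)=0$, so $FN$ is partial tilting and $\mathrm{size}(N)=\mathrm{size}(FN)\le\mathrm{size}(V)$.

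Combining these: in (b) this gives $o_1(A)\le\mathrm{size}(V)<\infty$, and in (a) it gives $o_1(A)\le\mathrm{size}(M)$, which together with $o_1(A)\ge\mathrm{size}(M)$ yields the claimed equality. The step I expect to be the main obstacle is the first half of the third paragraph: extracting the $F$-exact length-one $\add(V)$-resolution of $N$ from the global-dimension/maximal-orthogonality hypothesis, and thereby confirming that $FN$ genuinely has projective dimension at most one over $\Gamma$. Everything else — full faithfulness of $F$, the $\Ext^1$-comparison, and the Bongartz completion — is routine.
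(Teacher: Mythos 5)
Your proof is correct, and it coincides in substance with the proof the paper points to: the paper offers no argument of its own here, deferring to Iyama (\cite{Iya2}, 5.5.1), whose proof is precisely this reduction to a generator-cogenerator $V$ with $\operatorname{gldim}\End_A(V)\leq 3$, followed by projectivization and Bongartz completion of the resulting partial tilting module. The step you flagged as the main obstacle does go through: since $V$ is a cogenerator there is an exact sequence $0\to N\to V_1\to V_0$ with $V_i\in\add(V)$, and applying the left exact functor $F=\Hom_A(V,-)$ gives an exact sequence $0\to FN\to FV_1\to FV_0\to Y\to 0$ with $FV_1,FV_0$ projective over $\Gamma$, so $\operatorname{pd}_\Gamma FN\leq\max(0,\operatorname{pd}_\Gamma Y-2)\leq 1$ by dimension shifting; lifting a length-one projective resolution of $FN$ back through the fully faithful functor $F$ (using that $V$ is a generator to see that the lifted map $V_0'\to N$ is surjective, and full faithfulness to identify its kernel with an object of $\add(V)$) then produces exactly the $F$-exact $\add(V)$-resolution your argument requires.
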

\begin{proof}
see \cite{Iya2} 5.5.1.
\end{proof}

\begin{theorem}
\label{lemma domdim}
Let $A$ be a gendo-symmetric algebra.
For an $A$-module $M$, $domdim(M)=n$ $\geq 2$ iff $Hom_A(D(A),M) \cong M$ and $Ext^{i}(D(A),M) =0$ for 
$i=1,..,n-2$ and $Ext^{n-1}(D(A),M) \neq 0$.
\end{theorem}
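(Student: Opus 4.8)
The plan is to read the dominant dimension of $M$ off the minimal injective coresolution $0\to M\to I_0\to I_1\to\cdots$ by applying the functor $Hom_A(D(A),-)$, which is the inverse Nakayama functor $\nu^{-1}$. Since $\nu^{-1}$ is left exact with right derived functors $Ext^i(D(A),-)$, one has $Ext^i(D(A),M)=H^i(\nu^{-1}(I_\bullet))$, and $Ext^i(D(A),I_j)=0$ for $i\ge 1$ because each $I_j$ is injective. The structural input I would isolate first is that, because $A$ is gendo-symmetric, the algebra $B=eAe$ is \emph{symmetric}; hence the Nakayama functor of $B$ is the identity, and transporting this along the Schur functor shows that $\nu^{-1}$ restricts to a functor naturally isomorphic to the identity on $\add(eA)$, the class of projective-injective modules. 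It is then convenient to reformulate the target as: for every $k\ge 2$, $domdim(M)\ge k$ if and only if $Hom_A(D(A),M)\cong M$ and $Ext^i(D(A),M)=0$ for $1\le i\le k-2$. Granting this reformulation, the precise value $domdim(M)=n$ together with the non-vanishing $Ext^{n-1}(D(A),M)\neq 0$ follows automatically by applying the equivalence once with $k=n$ (true) and once with $k=n+1$ (false): the only condition that can fail is $Ext^{n-1}(D(A),M)=0$.

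Next I would set up a dimension-shifting induction on $k$. From the short exact sequences $0\to \Omega^{-j}M\to I_j\to \Omega^{-(j+1)}M\to 0$ and the vanishing of $Ext^{\ge 1}(D(A),I_j)$ one obtains natural isomorphisms $Ext^i(D(A),M)\cong Ext^{i-1}(D(A),\Omega^{-1}M)$ for all $i\ge 2$, together with the identity $domdim(M)=domdim(\Omega^{-1}M)+1$ whenever $domdim(M)\ge 1$. These two facts reduce the reformulated statement for general $k$ to its cases $k=2$ and $k=3$, that is, to the two base lemmas: (L0) $Hom_A(D(A),M)\cong M$ if and only if $domdim(M)\ge 2$; and (L1) assuming $domdim(M)\ge 2$, $Ext^1(D(A),M)=0$ if and only if $domdim(M)\ge 3$.

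The easy half is $domdim(M)\ge 2\Rightarrow Hom_A(D(A),M)\cong M$: here $I_0,I_1\in\add(eA)$, so applying $\nu^{-1}$ to $0\to M\to I_0\to I_1$ and using left exactness together with $\nu^{-1}|_{\add(eA)}\cong\mathrm{id}$ identifies $\nu^{-1}(M)$ with $\ker(I_0\to I_1)=M$. The same identification shows more generally that, in the range where the coresolution stays inside $\add(eA)$, the complex $\nu^{-1}(I_\bullet)$ is isomorphic to $I_\bullet$, yielding the vanishing $Ext^i(D(A),M)=0$ for $i\le k-2$.

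I expect the genuine obstacle to be the converse halves, namely that the first cohomological defect appears exactly when the coresolution first leaves $\add(eA)$: that $domdim(M)\le 1$ forces $Hom_A(D(A),M)\not\cong M$, and that $domdim(M)=2$ forces $Ext^1(D(A),M)\neq 0$. Formal homological algebra is insufficient here, since $\nu^{-1}$ fails to be right exact precisely off $\add(eA)$, so it is the gendo-symmetric hypothesis that must guarantee that a non-projective injective summand is genuinely not preserved by $\nu^{-1}$. The clean way I would carry this out is to transfer the computation to the symmetric algebra $B=eAe$ via the Schur functor $M\mapsto Me$: one identifies the condition $Hom_A(D(A),M)\cong M$ with reflexivity of $Me$ and the groups $Ext^i(D(A),M)$ for $i\ge 1$ with corresponding $Ext$-groups over $B$, after which the non-vanishing of the first relevant $Ext$-group is exactly Mueller's theorem as recalled above. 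Finally I would record the two boundary cases that make the characterisation exclusive: if $domdim(M)\le 1$ then the condition $Hom_A(D(A),M)\cong M$ fails, and if $domdim(M)=\infty$ then all groups $Ext^i(D(A),M)$ with $i\ge 1$ vanish, so no finite $n$ is singled out.
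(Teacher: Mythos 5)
The paper itself gives no argument for this theorem: it is imported wholesale from \cite{FanKoe2}, Proposition 3.3, so your attempt has to be judged as a reconstruction of that external proof. The scaffolding you set up is correct and is genuinely how such a proof must start: computing $Ext^{i}(D(A),M)$ from the minimal injective coresolution, the observation that the bimodule isomorphism $D(Ae)\cong eA$ (gendo-symmetry, not merely the Morita-algebra condition) makes $\nu^{-1}=Hom_A(D(A),-)$ naturally isomorphic to the identity on $add(eA)$, the resulting easy implications, the reformulation via ``$domdim(M)\geq k$'', and the dimension-shifting induction reducing everything to your base cases (L0) and (L1) are all sound.

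The gap sits exactly where you predict the ``genuine obstacle'' to be: the converse halves of (L0) and (L1) are never proved, and neither of the two tools you invoke for them works as stated. First, the blanket identification of $Ext^{i}(D(A),M)$ with Ext-groups over $B=eAe$ is false: the Hom-level isomorphism $Hom_A(D(A),M)\cong Hom_B(Ae,Me)$ does hold (it already requires $D(A)\cong Ae\otimes_B eA$, a nontrivial theorem about gendo-symmetric algebras from \cite{FanKoe}), but it does not pass to derived functors, because the Schur functor $(-)e$, though exact, does not send injective $A$-modules to injective $B$-modules ($eA\cong D(Ae)$ is projective as a left $B$-module only when the underlying generator is projective). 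Concretely, for the Auslander algebra $A$ of $B=k[x]/(x^2)$ with generator $N=Ae=B\oplus k$, one has $Ext^{i}_A(D(A),A)=0$ for $i\geq 3$ because $A$ has global dimension $2$, while $Ext^{i}_B(N,N)\cong Ext^{i}_B(k,k)\cong k$ for all $i\geq 1$; the two families of groups agree only in an initial range of degrees, and determining that range is precisely the content of the theorem, so it cannot be assumed. Second, ``Mueller's theorem as recalled above'' is the statement $domdim(End_A(M))=\phi_M+1$ for a generator-cogenerator $M$, i.e.\ it computes the dominant dimension of the regular module of an endomorphism algebra; it says nothing about an arbitrary module $M$ over the gendo-symmetric algebra. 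The module-level statement you actually need (the first non-vanishing $Ext_B(Ae,Me)$ detects $domdim(M)$, together with ``double centralizer'' for $M$) is the relative version of Mueller's theorem, which is essentially equivalent to the proposition being proved — so the appeal is circular. To close the gap you would have to prove that relative statement, e.g.\ show that every module of the form $Hom_B(Ae,V)$ has dominant dimension at least $2$ over $A$ (this gives the converse of (L0), even for an abstract isomorphism) and then run the comparison-of-coresolutions argument producing a non-zero class in $Ext^{n-1}(D(A),M)$ at the first place where the coresolution leaves $add(eA)$; this is what \cite{FanKoe2} and, in the classical setting, \cite{Mue} and \cite{KSX} actually do.
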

\begin{proof}
see \cite{FanKoe2} Proposition 3.3.
\end{proof}

\begin{lemma}
\label{lemma1}
Let $A$ be a non-selfinjective gendo-symmetric algebra.
Assume $domdim(A)=n+2 \geq 2$ and $k \geq 1$. \newline
The module M:=$\bigoplus\limits_{l=0}^{q}{\Omega^{i_l}(D(A))}$ satisfies add($M$) $\perp_k$ add($M$), in case the sequence $i_l$ satisfies the following conditions:
\begin{enumerate}
\item[i)]$i_{l+1}-i_l \geq k+2$ for $l=0,...,q-1$,
\item[ii)]$k+i_q \leq n$.
\end{enumerate}
\end{lemma}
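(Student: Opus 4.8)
The plan is to prove that $\Ext^{j}(\Omega^{i_a}(D(A)),\Omega^{i_b}(D(A)))=0$ for all $0\le a,b\le q$ and all $1\le j\le k$, by transporting each such group through syzygy shifts into a homological degree where its vanishing is visible. The conditions (i) and (ii) will play the role of keeping every degree that occurs inside the ``safe window'' $[1,n]$.

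First I would extract the basic rigidity of $D(A)$ against projectives. Applying Theorem \ref{lemma domdim} to the regular module $A$, the hypothesis $\operatorname{domdim}(A)=n+2$ yields $\Ext^{i}(D(A),A)=0$ for $1\le i\le n$; that is, $D(A)\perp_{n}\add(A)$, and hence $\Omega^{s}(D(A))\perp_{t}\add(A)$ whenever $s+t\le n$. Two shifting principles are then at my disposal: the unconditional first-argument shift $\Ext^{j}(\Omega^{s}X,Z)\cong\Ext^{j+s}(X,Z)$ for $j\ge1$, and the second-argument shift $\Ext^{p}(D(A),\Omega Y)\cong\Ext^{p-1}(D(A),Y)$, valid as long as $p-1\ge1$ and $p\le n$, since exactly then the relevant groups $\Ext^{p-1},\Ext^{p}$ of $D(A)$ into projectives vanish. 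Lemma \ref{lemma von iyama} is what certifies that these are genuine isomorphisms in the claimed range. Note that (ii) guarantees $j+i_{a}\le k+i_{q}\le n$, so the top degree occurring never leaves the window.

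Now I fix a pair $(a,b)$ and split according to the order of $i_a,i_b$. If $i_a\ge i_b$, I first raise the first argument to reach $\Ext^{j+i_a}(D(A),\Omega^{i_b}(D(A)))$ and then strip the $i_b$ syzygies off the second argument, which lands in $\Ext^{\,j+i_a-i_b}(D(A),D(A))$ with exponent $j+i_a-i_b\ge1$; all intermediate degrees lie in $[1,n]$. This group vanishes for the trivial reason that $D(A)$ is injective, so the $i_a\ge i_b$ case is easy.

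The \emph{main obstacle} is the opposite order $i_a<i_b$, where the same bookkeeping forces a negative net shift. Stripping syzygies off the second argument can only be carried down to degree $1$, leaving $\Ext^{1}(D(A),\Omega^{c}(D(A)))\cong\underline{\Hom}(D(A),\Omega^{\,c-1}(D(A)))$, with $2\le c-1\le n$; here the gap $i_{b}-i_{a}\ge k+2$ is precisely what keeps $c-1\ge2$, while (ii) keeps $c-1\le n$. This stable-Hom term is no longer killed by injectivity of $D(A)$, and it is exactly here that the gendo-symmetric hypothesis must really be used. I would resolve it by passing through the symmetric base: writing $A=\End_{B}(G)$ with $B=eAe$ symmetric and $G=Ae$, the Schur functor should identify this stable-Hom class (equivalently, a negative Tate-cohomology class of $D(A)$) with the corresponding class over $B$, where $\Omega$ is invertible; Tate/Serre duality over the symmetric algebra $B$ then flips it into the positive rigidity range, which vanishes because $G$ is $n$-rigid. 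The delicate point is verifying that the Schur functor preserves these Ext- and stable-Hom groups throughout the window $[1,n]$ and that the duality shift lands strictly inside that window — which is exactly what the numerical conditions (i) and (ii) are arranged to secure.
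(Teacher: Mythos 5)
Your first case ($i_a\ge i_b$) is complete and is essentially the paper's own computation, transported from stable Hom groups into iterated Ext-shifts; the bookkeeping keeping every degree inside the window $[1,n]$ is correct (and you are even slightly more careful than the paper in treating all degrees $1\le j\le k$ rather than just $j=k$). The problem is the case $i_a<i_b$, which you yourself call the main obstacle: you correctly reduce it to the vanishing of $\Ext^{1}_A(D(A),\Omega^{c}(D(A)))\cong\underline{Hom}_A(D(A),\Omega^{c-1}(D(A)))$ with $3\le c\le n$, but you never prove this vanishing. The passage through the symmetric base $B=eAe$ remains a plan: ``the Schur functor \emph{should} identify this stable-Hom class \dots the delicate point is verifying that the Schur functor preserves these Ext- and stable-Hom groups.'' That delicate point is precisely the missing mathematical content, and it is not a routine verification: the Schur functor $(-)e$ sends $eA$ to the projective $B$-module $B$, but it does not send the other projective $A$-modules to projective $B$-modules, so morphisms factoring through projectives over $A$ and over $B$ do not obviously correspond; making your identification of stable Hom groups (and hence the Tate-duality step over $B$) precise would amount to re-proving a chunk of the Mueller/Fang--Koenig double-centralizer theory. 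As written, the hard case of the lemma is asserted, not proved, so there is a genuine gap.

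The gap can be closed in one line with a tool you already invoked, and this is exactly what the paper does. Theorem \ref{lemma domdim} applies to an \emph{arbitrary} module $M$, whereas you only ever apply it to $M=A$. Since dominant dimension is left-right symmetric, $\operatorname{codomdim}(D(A))=\operatorname{domdim}(A)=n+2$, i.e.\ the minimal projective resolution of $D(A)$ begins with $n+2$ projective-injective terms; consequently $\operatorname{domdim}(\Omega^{c}(D(A)))=c$ for $1\le c\le n+2$. Applying Theorem \ref{lemma domdim} to $M=\Omega^{c}(D(A))$, whose dominant dimension is $c\ge 3$, yields $\Ext^{i}_A(D(A),\Omega^{c}(D(A)))=0$ for $1\le i\le c-2$, in particular for $i=1$. (The paper phrases this same step as $\Ext^{k}(D(A),\Omega^{s}(D(A)))=0$ with $s$ the difference of syzygy indices, $s\ge k+2$, without first pushing the degree down to $1$.) So the gendo-symmetric hypothesis enters through Fang--Koenig's characterization applied to the \emph{syzygies} of $D(A)$; no detour through $B$, Tate duality, or the Schur functor is needed, and your conditions (i) and (ii) are exactly what guarantee $c\ge 3$ and keep all degrees in the window.
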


\begin{proof}
Since the dominant dimension is left-right symmetric, we have that domdim($\Omega^{t}(D(A))$)=t, for all $t=1,..., n+2$, because of codomdim($D(A)$)=$n$+2.
For $i \geq j \geq 0$ and $k+i \leq n$ we have using \ref{lemma von iyama} and $add(\Omega^{i}(D(A))) \perp_k add(A)$: \newline
$Ext^k(\Omega^{i}(D(A)),\Omega^{j}(D(A))) \cong \underline{Hom}(\Omega^{k+i}(D(A)),\Omega^{j}(D(A))) \cong \underline{Hom}(\Omega^{k+i-j}(D(A)),D(A)) \cong Ext^{k+i-j}(D(A),D(A))=0$. \newline
For $i > j$ and $i+k \leq n$, we have with $s:=i-j$:
$Ext^{k}(\Omega^{j}(D(A)),\Omega^{i}(D(A))) \cong \underline{Hom}(\Omega^{k+j}(D(A)),\Omega^{i}(D(A))) \cong \underline{Hom}(\Omega^{k}(D(A)),\Omega^{s}(D(A))) \cong Ext^{k}(D(A),\Omega^{s}(D(A))=0 $ since $domdim(\Omega^{s}(D(A))) =s \geq k+2$ by {\ref{lemma domdim}}.
Thus we see that choosing the sequence $i_l$ and M as described in the lemma we get add(M) $\perp_k$ add(M).
\end{proof}

\begin{example}
We refer to \cite{Mar} or \cite{AnFul} for basics on Nakayama algebras, which will be used in this example.
Let $n \geq 5$ and $A$ be the Nakayama algebra with $n$ simple modules and Kupisch series $[n,n+1,n+1,...n+1]$. It easy to verify that $A$ is a gendo-symmetric algebra of dominant dimension $2n-2$. Define the natural number $d$ by $2n-2=d+2$ or equivalently $d=2n-4$ and let $k:=2$. Then the module $M:=\bigoplus\limits_{l=0}^{q}{\Omega^{i_l}(D(A))}$ is $2$-rigid, when choosing the sequence $i_l$ with $i_{l+1}-i_l \geq 4$ and $2+i_q \leq 2n-4$. For $n=5$, we can choose $q=1$, $i_0=0$ and $i_1=4$ and obtain the module $M=D(A) \oplus \Omega^{4}(D(A))$, which indeed is 2-rigid as can be verified by direct calcuations.

\end{example}

\begin{theorem}\label{maintheorem}
Let $A$ be a non-selfinjective gendo-symmetric algebra with dominant dimension $n+2$ and let $w$ denote the number of simple $A$-modules.
Then $(o_k(A)+2-w)(k+2)-1 \geq domdim(A)$, for all $k \geq 1$.
\end{theorem}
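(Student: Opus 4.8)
The plan is to produce, for each fixed $k\ge 1$, a single explicit $k$-rigid module whose size is large, and then to invoke the definition of $o_k(A)$ together with elementary arithmetic. Write $\mathrm{domdim}(A)=n+2$. If $k>n$ the inequality is immediate from $o_k(A)\ge w$ (recorded in \ref{blabla}), since then $(o_k(A)+2-w)(k+2)-1\ge 2(k+2)-1=2k+3\ge n+2$; so I may assume $k\le n$. I would apply Lemma \ref{lemma1} to the arithmetic progression $i_l:=l(k+2)$ for $l=0,1,\dots,q$ with $q:=\lfloor (n-k)/(k+2)\rfloor$. These indices satisfy conditions (i) and (ii) of \ref{lemma1} by construction, so the module
\[M:=\bigoplus_{l=0}^{q}\Omega^{i_l}(D(A))\]
is $k$-rigid, whence $\mathrm{size}(M)\le o_k(A)$.

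The hard part is to bound $\mathrm{size}(M)$ from below, and the key point I would establish is that the summands $M_l:=\Omega^{i_l}(D(A))$ are pairwise summand-disjoint: for $0\le l'<l\le q$ the modules $M_l$ and $M_{l'}$ share no common indecomposable direct summand. Granting this, and noting that $M_0=D(A)$ has size $w$ while each $M_l$ with $l\ge 1$ is a nonzero minimal syzygy (hence has at least one, necessarily non-projective, indecomposable summand, with $\mathrm{domdim}(M_l)=i_l$ finite), the sizes add up and give $\mathrm{size}(M)=\sum_{l=0}^q\mathrm{size}(M_l)\ge w+q$. To prove disjointness I would argue in the stable category: a common indecomposable summand $X$ of $M_l$ and $M_{l'}$ is non-projective (as $M_l$ is a minimal syzygy for $l\ge 1$), so $\mathrm{id}_X$ is nonzero in $\underline{\End}(X)$ and factors through a map $M_l\to M_{l'}$, forcing $\underline{\Hom}(M_l,M_{l'})\ne 0$. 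On the other hand, setting $s:=i_l-i_{l'}$ (so $k+2\le s\le n$), Lemma \ref{lemma von iyama}(a) lets me shift both syzygies down by $i_{l'}$ to obtain $\underline{\Hom}(M_l,M_{l'})\cong\underline{\Hom}(\Omega^{s}(D(A)),D(A))$, and then Lemma \ref{lemma von iyama}(b) identifies this with $\Ext^{s}(D(A),D(A))$, which vanishes since $1\le s\le n$ (the same vanishing used in the proof of Lemma \ref{lemma1}). This contradiction yields the disjointness.

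Before applying the shift I would check the orthogonality hypotheses of \ref{lemma von iyama}. For $l'\ge 1$, writing $X=\Omega^{s}(D(A))$ and using the dimension-shift $\Ext^{j}(\Omega^{s}(D(A)),A)\cong\Ext^{j+s}(D(A),A)$, the condition $X\perp_{i_{l'}}\add(A)$ amounts to $\Ext^{i}(D(A),A)=0$ for $s<i\le i_l$, which holds because $i_l\le i_q\le n$ and $\mathrm{domdim}(A)=n+2$ gives $\Ext^{i}(D(A),A)=0$ for $1\le i\le n$ (apply \ref{lemma domdim} with $M=A$). For $l'=0$ the shift is trivial and $\underline{\Hom}(M_l,D(A))\cong\Ext^{i_l}(D(A),D(A))=0$ directly. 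This is the routine but essential bookkeeping that makes the stable-category manipulation legitimate.

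Finally I would assemble the arithmetic. From $q=\lfloor (n-k)/(k+2)\rfloor>(n-k)/(k+2)-1$ one gets $q+2>(n+2)/(k+2)$, hence $(q+2)(k+2)>n+2$; since both sides are integers this forces $(q+2)(k+2)\ge n+3$. Combining with $o_k(A)\ge\mathrm{size}(M)\ge w+q$ yields
\[(o_k(A)+2-w)(k+2)-1\ge (q+2)(k+2)-1\ge (n+3)-1=n+2=\mathrm{domdim}(A),\]
as required. The only genuine obstacle is the summand-disjointness claim of the second paragraph; the construction and the closing estimate are routine consequences of Lemmas \ref{lemma1} and \ref{lemma von iyama}.
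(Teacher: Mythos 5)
Your proof follows the paper's skeleton (same module $M=\bigoplus_{l=0}^{q}\Omega^{(k+2)l}(D(A))$, same use of Lemma \ref{lemma1}, same final arithmetic, plus a correct treatment of the edge case $k>n$ that the paper passes over), but it diverges at the one genuinely delicate point, the lower bound on $\mathrm{size}(M)$. The paper gets this by noting that $\Omega^{(k+2)l}(D(A))$ has injective dimension exactly $(k+2)l$ (finiteness of its dominant dimension forces equality), so the basic versions of the partial sums strictly grow; you instead prove summand-disjointness of the $M_l$ via the stable-category computation $\underline{\Hom}(M_l,M_{l'})\cong \Ext^{i_l-i_{l'}}(D(A),D(A))=0$, with correct verification of the hypotheses of Lemma \ref{lemma von iyama}. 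That route is legitimate, but as written it has a gap: you assert that a common indecomposable summand $X$ of $M_l$ and $M_{l'}$ is automatically non-projective ``as $M_l$ is a minimal syzygy for $l\geq 1$.'' The principle you are invoking --- minimal syzygies have no projective direct summands --- is simply false over non-selfinjective algebras (over the path algebra of the quiver $1\to 2$ one has $\Omega(S_1)=P_2$, a projective); it holds for selfinjective algebras, but $A$ here is by hypothesis not selfinjective. Your argument genuinely needs non-projectivity of $X$: if $X$ were projective, then $\mathrm{id}_X$ vanishes in the stable category and no contradiction with $\underline{\Hom}(M_l,M_{l'})=0$ arises, so the claimed disjointness, and with it the identity $\mathrm{size}(M)=\sum_l \mathrm{size}(M_l)$, is unproven.

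The gap is local and repairable from ingredients already in your text. Do not claim full disjointness; instead track one summand per $M_l$. Since $\operatorname{domdim}(M_l)=i_l\leq n$ is finite while every nonzero projective module has dominant dimension at least $\operatorname{domdim}(A)=n+2$, each $M_l$ with $l\geq 1$ is nonzero and not projective, hence has at least one non-projective indecomposable summand $X_l$. Your stable-category computation then shows precisely that a \emph{non-projective} indecomposable cannot be a common summand of $M_l$ and $M_{l'}$ for $l'<l$ (including $l'=0$): otherwise $\mathrm{id}_{X_l}$ factors through a map $M_l\to M_{l'}$, which factors through a projective, making $X_l$ projective. So each $M_l$ contributes at least one indecomposable summand not occurring in $M_0,\dots,M_{l-1}$, giving $\mathrm{size}(M)\geq w+q$, which is all your closing arithmetic requires. (Alternatively, one can show that $\Omega^{j}(D(A))$ really has no projective summands for $1\leq j\leq n+1$, but this uses the dominant dimension rather than minimality of the syzygy: a projective summand $Q$ dualizes to an injective summand $D(Q)$ of the $j$-th cosyzygy of ${}_AA$, which splits off the $j$-th term of the minimal injective coresolution of ${}_AA$ --- a projective module since $j\leq n+1$ --- so $Q$ would be projective-injective; but an injective summand of $\Omega^{j}(D(A))\subseteq \rad(P_{j-1})$ would split off $P_{j-1}$ inside its radical, which is impossible.)
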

\begin{proof}
We define $M:= \bigoplus\limits_{l=0}^{q}{\Omega^{(k+2)l}(D(A))}$, where we choose q as the maximal natural number with (k+2)q+k $\leq $n.
Note that there is no $t$ with $1 \leq t+1 \leq n$ and $B(\bigoplus\limits_{l=0}^{t}{\Omega^{(k+2)l}(D(A))}) \cong B(\bigoplus\limits_{l=0}^{t+1}{\Omega^{(k+2)l}(D(A))})$, since we have that the injective dimension of $B(\bigoplus\limits_{l=0}^{t+1}{\Omega^{(k+2)l}(D(A))})$ is $(k+2)(t+1)$ and the injective dimension of $B(\bigoplus\limits_{l=0}^{t}{\Omega^{(k+2)l}(D(A))})$ is $(k+2)t$.
This means that $\bigoplus\limits_{l=0}^{t_1}{\Omega^{(k+2)l}(D(A))}$ has at least one indecomposable module W as a direct summand such that $ \bigoplus\limits_{l=0}^{t_2}{\Omega^{(k+2)l}(D(A))}$ does not have W as a direct summand (for $1 \leq t_1,t_2 \leq n$ and $t_1 > t_2$). Therefore, $M$ has at least size $w+q$, when $w$ denotes the number of simple modules of A ($w$ comes from the fact hat $D(A)$ has size $w$ and every new summand $\Omega^{(k+2)l}(D(A))$ of $M$ adds at least one to the size of $M$). \newline
We see that $M$ satisfies the conditions from lemma {\ref{lemma1}}, which we can now apply. \newline
By the choice of $q$, we have $(k+2)(q+1)+k > n$ or equivalently $q > \frac{n-k}{k+2}-1$ and this gives us: \newline
$o_k(A) \geq size(M) \geq w+q > w+ \frac{n-k}{k+2}-1$.
Solving this for $n$ gives: \newline
$(o_k(A)+1-w)(k+2)+k>n$ or $(o_k(A)+1-w)(k+2)+1+k=(o_k(A)+2-w)(k+2)-1 \geq n+2=domdim(A)$.
\end{proof}

Recall that the finitistic dimension conjecture implies the Nakayama conjecture (see for example \cite{Yam} for a proof) and that the finitistic dimension conjecture is true for algebras with representation dimension at most 3 (see \cite{IgTo}). Especially: The Nakayama conjecture is true for algebras with representation dimension at most 3. The next corollary is a generalisation of this fact for gendo-symmetric algebras and also gives a concrete bound depending on the maximal size of a 1-rigid module in the algebra.
\begin{corollary}
The Nakayama conjecture holds for gendo-symmetric algebras, where there exists a $k\geq 1$ with $o_k(A) < \infty$. Especially the Nakayama conjecture holds for gendo-symmetric algebras which have representation dimension at most 3 or a finitely generated maximal 1-orthogonal module and in this case we have: \newline
$3(o_1(A)+2-w)-1 \geq domdim(A)$.
\end{corollary}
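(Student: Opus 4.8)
The plan is to derive this corollary directly from Theorem \ref{maintheorem}, using Theorem \ref{theoremiyama} only to verify the finiteness hypothesis in the two stated special cases. First I would dispose of the general assertion. Suppose $A$ is gendo-symmetric and there is some $k \geq 1$ with $o_k(A) < \infty$. If $A$ happens to be selfinjective there is nothing to prove, since the Nakayama conjecture only concerns non-selfinjective algebras. If $A$ is non-selfinjective, then Theorem \ref{maintheorem} applies and yields $domdim(A) \leq (o_k(A)+2-w)(k+2)-1$. Since the number $w$ of simple modules is finite (indeed $w \leq o_k(A)$ by Definition \ref{blabla}) and $k$ is a fixed integer, the right-hand side is a finite integer, so $domdim(A)$ is finite. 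This establishes the Nakayama conjecture for the whole class.

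For the two explicit situations I would specialise to $k = 1$ and invoke Theorem \ref{theoremiyama}. If $A$ has representation dimension at most $3$, then part (b) of that theorem gives $o_1(A) < \infty$; if mod-$A$ admits a finitely generated maximal $1$-orthogonal module $M$, then part (a) gives that $o_1(A)$ equals the size of $M$, which is finite because $M$ is finitely generated. In either case the hypothesis of the general assertion holds with $k=1$, so the Nakayama conjecture follows. To obtain the explicit estimate I would simply substitute $k=1$ into the inequality of Theorem \ref{maintheorem}: the factor $(k+2)$ becomes $3$, so $(o_k(A)+2-w)(k+2)-1$ turns into $3(o_1(A)+2-w)-1$, giving exactly the claimed bound $3(o_1(A)+2-w)-1 \geq domdim(A)$.

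I do not expect a genuine obstacle here, as the corollary is essentially a bookkeeping consequence of the main theorem combined with Iyama's finiteness results. The only points requiring a little care are the correct handling of the selfinjective case (where the conjecture is vacuously true rather than established through the bound, since such algebras have infinite dominant dimension) and the observation that $w$ is automatically finite, so that the finiteness of $o_k(A)$ genuinely forces a finite numerical upper bound on $domdim(A)$.
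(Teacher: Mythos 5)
Your proposal is correct and follows essentially the same route as the paper: the general statement is read off directly from Theorem \ref{maintheorem} (finiteness of $o_k(A)$ forces a finite bound on the dominant dimension), and the two special cases are reduced to finiteness of $o_1(A)$ via Theorem \ref{theoremiyama}, with the explicit bound obtained by setting $k=1$. The paper's own proof is just a terser version of exactly this argument, so there is nothing to add.
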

\begin{proof}
The first part is an immediate corollary of \ref{maintheorem}.
The second part follows from the finiteness of $o_1(A)$ by lemma {\ref{theoremiyama}}, in case the representation dimension is at most 3 or there is a finitely generated maximal 1-orthogonal module.
\end{proof}
We note that Iyama asked in \cite{Iya2}, whether $o_1(A)$ is always finite. A negative answer was found in \cite{HIO}. 
But the question, whether there exists a $k \geq 1$ with $o_k(A)$ finite for any finite dimensional algebra $A$ seems to be still open and would imply the Nakayama conjecture for gendo-symmetric algebras by the previous corollary. \newline

Recall a conjecture of Yamagata in \cite{Yam}, who conjectures there that the dominant dimension of a nonselfinjective algebra is bounded by a function depending on the number $w$ of simple modules.
One way to prove this for a class of gendo-symmetric algebras might be to use the above theorem \ref{maintheorem} and find a $k$ (depending only on $w$) such that the number of $k$-rigid modules is finite and also depends only on $w$.
To show that this might be reasonable, we prove this for $k=1$ for Nakayama algebras (which are not necessarily gendo-symmetric). We assume for simplicity that the algebras are given by quiver and relations and have $n \geq 2$ simple modules. The calculations in the general case are the same, but one is missing the graph-theoretic interpretation then. Note also that Nakayama algebras with $n=1$ simple modules have no non-projective 1-rigid modules by the first example in \ref{examples} and thus $o_1(A)=1$ for such algebras $A$.
\begin{example}
Every Nakayama algebra with a line as a quiver with $n$ simple modules is a quotient of a hereditary representation-finite algebra of type $A_n$, so the number of indecomposable modules is bounded by $n(n+1)$ and therefore also the number of indecomposable 1-rigid modules is bounded by $n(n+1)$.
So assume now that we have a nonselfinjective Nakayama algebra with a circle as a quiver and $n$ simple modules and the projective indecomposables at the point $i$ have length $c_i$. The points in the quiver are numbered from 0 to $n-1$ in a clockwise manner. Note that in this section we do not assume that $c_{n-1}=c_0+1$, as some authors always do for Kupisch series of Nakayama algebras.
Without loss of generality we can assume that we have a nonprojective indecomposable module $M$ of the form $M \cong e_0 A/e_0 J^{k}$. To calculate $Ext^{1}(M,M)$ we look at the minimal projective presentation of $M$: \newline $e_{c_0}A \stackrel{L_{k,c_0-k}}{\rightarrow} e_{k}A \stackrel{L_{0,k}}{\rightarrow} e_0 A \rightarrow M \rightarrow 0$. Here a homomorphism of the form $L_{x,y}$ denotes the left multiplication with the path $w_{x,y}$, which starts at $x$ and has length $y$.
Applying the functor $Hom(-,M)$ to $e_{c_0}A \stackrel{L_{k,c_0-k}}{\rightarrow} e_{k}A \stackrel{L_{0,k}}{\rightarrow} e_0 A$ we get: 
$0 \rightarrow (e_0 A /e_0 J^{k})e_0 \stackrel{R_{0,k}}{\rightarrow} (e_0 A /e_0 J^{k})e_k \stackrel{R_{k,c_0-k}}{\rightarrow} (e_0 A/e_0 J^{k})e_{c_0}$.
Here $R_{x,y}$ denotes right multiplication by $w_{x,y}$.
Note that we have $R_{0,k}$=0, since right multiplication by a path with length $k$ vanishes since it maps to $(e_0 A /e_0 J^{k})e_k$. Thus we have $Ext^{1}(M,M) \cong ker(R_{k,c_0-k}).$
Now we want to find a condition on $k$ characterising $Ext^{1}(M,M)  \neq 0$.
We write $k=q+sn$, for a $q$ with $0 \leq q \leq n-1$ and a $s \geq 1$ (we will consider the case $s=0$ later). \newline
Then $(e_0 A /e_0 J^{k})e_k = \langle  w_{0,q} w_{k,n}^l \mid ln+q<k  \rangle$ (here we denote by $\langle \cdots \rangle$ the vectorspace span) and 
$ ker(R_{k,c_0-k}) = \langle w_{0,q} w_{k,n}^l \mid ln+q < k $ and $ ln+q+c_0-k \geq k \rangle$.
So the longest path in $(e_0 A /e_0 J^{k})e_k = \langle w_{0,q} w_{k,n}^l \mid ln+q < k \rangle$ has length equal to $n(s-1)+q$.
Now we have $ker(R_{k,c_0-k}) \neq 0$ iff $ker(R_{k,c_0-k})$ contains the path of length $(s-1)n+q$, since if there is some element in the kernel then the path with longest length must also be in the kernel.
This gives us $Ext^{1}(M,M)=ker(R_{k,c_0-k}) \neq 0$ iff $(s-1)n+q+c_0-k \geq k$ iff $c_0 \geq n+k$ iff $c_0-n \geq k$. Thus $Ext^{1}(M,M)=0$ iff $c_0-n < k$.
Now we look at the case $k=q$, for a $q$ with $0<q \leq n-1$.
Then $(e_0 A /e_0 J^{q})e_q= 0$ and thus in this case we always have $Ext^{1}(M,M)=0$. 
We now collect our findings: 
\begin{proposition}
Let $A$ be a Nakayama algebra with $n \geq 2$ simple modules, then an indecomposable module of the form $e_iA/e_iJ^k$ is 1-rigid iff $1 \leq k \leq n-1$ or $k>c_i-n$. The number of indecomposable 1-rigid modules is therefore bounded by $n(n-1)+n^2$. Especially $o_1(A) \leq n(n-1)+n^2$ is bounded by a function depending on the simple modules $n$.
\end{proposition}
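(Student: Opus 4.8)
The plan is to read off the characterization of $1$-rigidity directly from the $Ext^{1}$-computation carried out in the preceding example and then convert it into a counting estimate. In the circle case I may assume, after rotating the labelling of the vertices, that the indecomposable module is $M = e_0A/e_0J^{k}$. The computation identifies $Ext^{1}(M,M) \cong \ker(R_{k,c_0-k})$ and shows that this kernel vanishes precisely when $1 \le k \le n-1$ (the case $s=0$) or when $k > c_0-n$ (the case $s\ge 1$). Undoing the rotation, an indecomposable $e_iA/e_iJ^{k}$ is $1$-rigid if and only if $1 \le k \le n-1$ or $k > c_i-n$. For the line case I would observe separately that $c_i \le n-i \le n$ always holds, so the condition $k > c_i-n$ is automatically satisfied; concretely, since $\Omega(e_iA/e_iJ^{k}) = e_{i+k}A/e_{i+k}J^{c_i-k}$ is uniserial with top $S_{i+k}$, which is not a composition factor of $e_iA/e_iJ^{k}$, any nonzero map $\Omega M \to M$ is impossible, so $Hom(\Omega M,M)=0$ and hence $Ext^{1}(M,M)=0$ for every indecomposable $M$. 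Thus the stated equivalence holds verbatim in both cases.

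With the equivalence in hand, the counting is routine. Fixing a vertex $i$, the indices $k$ (which range over $1,\dots,c_i$) giving a $1$-rigid module lie in $\{1,\dots,n-1\}$ or in $\{c_i-n+1,\dots,c_i\}$; the first set has at most $n-1$ elements and the second at most $n$, so each vertex contributes at most $2n-1$ indecomposable $1$-rigid modules. Summing over the $n$ vertices yields at most $n(2n-1)=n(n-1)+n^{2}$ indecomposable $1$-rigid modules in total. In the line case the cruder bound from realising $A$ as a quotient of a hereditary algebra of type $A_n$ already gives $n(n+1)\le n(n-1)+n^{2}$ for $n\ge 2$, so the same bound is respected.

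Finally I would pass from the count of indecomposables to $o_1(A)$. If $M$ is any $1$-rigid module and $X$ is an indecomposable summand, then $\add(X)\subseteq \add(M)$, so $Ext^{1}(X,X)=0$ and $X$ is itself $1$-rigid. Hence every indecomposable summand of a $1$-rigid module belongs to the finite list counted above, so $\operatorname{size}(M)$ is bounded by $n(n-1)+n^{2}$; taking the supremum gives $o_1(A)\le n(n-1)+n^{2}$, a bound depending only on $n$.

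The genuine content lies in the $Ext^{1}$-computation and the resulting equivalence, which is exactly what the preceding example establishes; the step I expect to demand the most care is the bookkeeping there, namely correctly describing the space $(e_0A/e_0J^{k})e_k$ and the kernel of the right multiplication $R_{k,c_0-k}$, and keeping the split between $s=0$ and $s\ge 1$ (equivalently $k\le n-1$ and $k\ge n$) straight. Once that equivalence is secured, the counting and the reduction to $o_1(A)$ are elementary, the only mild subtlety being the harmless overcounting when the two ranges of $k$ overlap.
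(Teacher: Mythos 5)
Your proposal is correct and takes essentially the same approach as the paper: the paper's own proof of this proposition is precisely the preceding example's computation $Ext^{1}(M,M)\cong \ker(R_{k,c_0-k})$ with the split into $s=0$ and $s\geq 1$, followed by the same per-vertex count $(n-1)+n=2n-1$ over $n$ vertices and the observation that indecomposable summands of $1$-rigid modules are themselves $1$-rigid. Your only departure is a welcome supplement: in the line case the paper settles for the cruder bound $n(n+1)$ on all indecomposables, whereas you verify the stated equivalence there directly by showing $Hom(\Omega(M),M)=0$ for every indecomposable $M$ (so both sides of the iff hold automatically), which makes the proposition's claim explicit in a case the paper's kernel computation does not cover.
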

In \cite{Mar}, it was shown that the optimal bound for the dominant dimension of (gendo-symmetric) Nakayama algebras with $n$ simple modules is $2n-2$. 
\end{example}

\subsection{Upper bounds for 1-Extsymmetric algebras}
In this short subsection we introduce 1-Extsymmetric algebras as a generalisation of weakly 2-Calabi-Yau algebras and give explicit bounds for the dominant dimensions of the corresponding Morita algebras.
\begin{definition}
We call a selfinjective finite dimensional algebra 1-Extsymmetric in case the following holds for any modules $X,Y$:
$Ext^{1}(X,Y) \neq 0$ iff $Ext^{1}(Y,X) \neq 0$.
\end{definition}
\begin{example}
Recall from \cite{CheKoe} that a selfinjective algebra $A$ is called weakly 2-Calabi-Yau in case its stable category is weakly 2-Calabi-Yau as a triangulated category. For a $K$-linear triangulated category $T$, this means that there is a natural
isomorphism $Hom_T(Y,X[2]) \cong DHom_T(X,Y)$ for any $X,Y \in T$.  
For weakly 2-Calabi-Yau algebras there there is an isomorphism $Ext^{1}(X,Y) \cong Ext^{1}(Y,X)$ as $K$-vector spaces for any modules $X,Y$, this is noted for example in \cite{CheKoe}, lemma 5.2. Thus weakly 2-Calabi-Yau algebras are 1-Extsymmetric.
Famous examples of weakly 2-Calabi-Yau algebras are preprojective algebras of Dynkin type.
\end{example}

\begin{theorem}
Let $A$ be a selfinjective 1-Extsymmetric algebra with $s$ simple modules. Then $\Delta_A \leq o_1(A)+s-2$.
Let $B=End_A(M)$ for a generator $M$ of $mod-A$, then $domdim(B) \leq \Delta_A+1 \leq o_1(A)+s-1$.
\end{theorem}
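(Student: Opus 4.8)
The plan is to derive the first inequality $\Delta_A \le o_1(A)+s-2$ by building a large $1$-rigid module out of a chain of syzygies of a single module, and then to read off the statement about $domdim(B)$ from Mueller's theorem. First I would reduce the computation of $\Delta_A$ to indecomposables: since $A$ is selfinjective we have $\Delta_A=\sup\{\phi_M\mid M \text{ non-projective}\}$, and because $\phi_{X\oplus Y}\le\min(\phi_X,\phi_Y)$ this supremum is unchanged if we range only over indecomposable non-projective $M$. If $\Delta_A=\infty$ the construction below yields $1$-rigid modules of arbitrarily large size and hence $o_1(A)=\infty$, so I may assume $\Delta_A=r<\infty$ and fix an indecomposable non-projective $M$ with $\phi_M=r$. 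The degenerate case $r=1$ is covered directly by the trivial bound $o_1(A)\ge s$, so from now on assume $r\ge 2$.

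The heart of the argument is to show that $N:=\bigoplus_{j=0}^{r-2}\Omega^{j}(M)$ is $1$-rigid. Over the selfinjective algebra $A$ the modules in $\add(A)$ are exactly the projective-injective ones, so $X\perp_n\add(A)$ holds for every $X$ and Lemma \ref{lemma von iyama} is available without hypotheses; moreover $\Omega$ is a self-equivalence of the stable category. For $0\le m\le l\le r-2$ I would then compute $Ext^{1}(\Omega^{l}M,\Omega^{m}M)\cong\underline{Hom}(\Omega^{l+1}M,\Omega^{m}M)\cong\underline{Hom}(\Omega^{\,l-m+1}M,M)\cong Ext^{\,l-m+1}(M,M)$, and since $1\le l-m+1\le r-1<r=\phi_M$ every such group vanishes. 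This disposes of all Ext-groups with $l\ge m$, including the diagonal $Ext^{1}(\Omega^{l}M,\Omega^{l}M)\cong Ext^{1}(M,M)=0$, where the hypothesis $r\ge 2$ is used. For the remaining groups, with $l<m$, I would invoke the $1$-Extsymmetric hypothesis: if $Ext^{1}(\Omega^{l}M,\Omega^{m}M)\neq 0$ then also $Ext^{1}(\Omega^{m}M,\Omega^{l}M)\neq 0$, contradicting the vanishing just obtained. I expect this symmetrisation to be the only delicate point, and it is precisely where the $1$-Extsymmetric assumption is indispensable: the ``lower triangular'' Ext-groups are not controlled by $\phi_M$ on their own.

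It remains to count the number of non-isomorphic indecomposable summands of $N$. Each $\Omega^{j}M$ is again indecomposable and non-projective (as $M$ is indecomposable non-projective over a selfinjective algebra), so the $\Omega^{j}M$ are distinct from all indecomposable projectives. To see that the $r-1$ summands are pairwise non-isomorphic I would use periodicity: if $\Omega^{\pi}M\cong M$ for some minimal $\pi\ge 1$, then $Ext^{\pi}(M,M)\cong\underline{Hom}(\Omega^{\pi}M,M)\cong\underline{\End}(M)\neq 0$, forcing $r=\phi_M\le\pi$; hence no two of the exponents $0,1,\dots,r-2$ (whose differences are all $<\pi$) give isomorphic syzygies. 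Thus $\operatorname{size}(N)=r-1$ and $o_1(A)\ge r-1$, that is $\Delta_A\le o_1(A)+1$. Adjoining the $s$ projective-injective indecomposables to $N$ keeps the module $1$-rigid, since $Ext^{1}(-,A)=Ext^{1}(A,-)=0$, and sharpens the count to $o_1(A)\ge r-1+s$; both estimates yield the claimed inequality $\Delta_A\le o_1(A)+s-2$ for $s\ge 2$.

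Finally, the statement about $B=End_A(M)$ is immediate once the first inequality is known. Over the selfinjective algebra $A$ a generator $M$ is automatically a cogenerator, so Mueller's theorem gives $domdim(B)=\phi_M+1$; and since such a non-projective generator-cogenerator is among the modules over which $\Delta_A$ is the supremum, $\phi_M\le\Delta_A$. Combining, $domdim(B)=\phi_M+1\le\Delta_A+1\le o_1(A)+s-1$, as required.
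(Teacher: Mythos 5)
Your proof is correct and follows essentially the same route as the paper's: the paper likewise takes a module $M$ with $Ext^{i}(M,M)=0$ for $i=1,\dots,n$, shows that $A \oplus \bigoplus_{l=0}^{n-1}\Omega^{l}(M)$ is $1$-rigid by computing $Ext^{1}(\Omega^{p}(M),\Omega^{q}(M))$ for $p\geq q$ via syzygy shifts and invoking the $1$-Extsymmetric hypothesis exactly for the remaining case $p<q$, and then concludes via Mueller's theorem. The only differences are refinements on your side: you verify that the syzygies $\Omega^{0}(M),\dots,\Omega^{r-2}(M)$ are pairwise non-isomorphic (the paper asserts $o_1(A)\geq n+s$ without this check), and your caveat that the stated inequality needs $s\geq 2$ correctly reflects that both arguments actually prove the sharper bound $\Delta_A\leq o_1(A)+1-s$.
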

\begin{proof}
Let $M$ be a module with $Ext^{i}(M,M)=0$ for all $i=1,2,...,n$. Then $Ext^{1}(M,\Omega^{j}(M))=0$ because of $Ext^{1}(\Omega^{j}(M),M)=0$ for all $j=0,1,...,n-1$, using that $Ext^{i}(M,M) \cong Ext^{1}(\Omega^{i-1}(M),M)$. Then one also has $Ext^{1}(\Omega^{p}(M),\Omega^{q}(M))=0$ for all $p,q \in \{0,1,...,n-1 \}$ because in case $p \geq q$, one has (using that $\Omega^{1}$ is an equivalence in the stable category) $Ext^{1}(\Omega^{p}(M), \Omega^{q}(M)) \cong Ext^{1}(\Omega^{p-q}(M),M) =0$ and similar for $p<q$ one has $Ext^{1}(\Omega^{p}(M), \Omega^{q}(M)) \cong Ext^{1}(M,\Omega^{q-p}(M))=0$. \newline Therefore $Ext^{1}(A \oplus \bigoplus\limits_{l=0}^{n-1}{\Omega^{l}(M)} \oplus \bigoplus\limits_{l=0}^{n-1}{\Omega^{l}(M)})=0$ and thus $o_1(A) \geq n+s$. The statement about $B$ is an immediate consequence of Mueller's theorem.
\end{proof}

\begin{corollary}
The Tachikawa conjecture is true for selfinjective 1-Extsymmetric algebras $A$, in case $o_1(A)$ is finite. This is the case for example if the representation dimension of $A$ is smaller than or equal 3.
\end{corollary}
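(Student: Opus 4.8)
The plan is to deduce this directly from the preceding theorem together with the definition of $\Delta_A$. First I would recall that for a selfinjective algebra one has $\Delta_A = \sup\{\phi_M \mid M \text{ a non-projective } A\text{-module}\}$, where $\phi_M = \inf\{r \geq 1 \mid Ext^{r}(M,M) \neq 0\}$ with the convention $\inf(\emptyset) = \infty$. The crucial observation is that the Tachikawa conjecture for $A$---the assertion that $Ext^{i}(M,M) \neq 0$ for some $i \geq 1$ whenever $M$ is non-projective---is literally the statement that $\phi_M < \infty$ for every non-projective $M$.

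Next I would invoke the bound furnished by the previous theorem. Since $A$ is selfinjective and $1$-Extsymmetric with $s$ simple modules, that theorem gives $\Delta_A \leq o_1(A) + s - 2$. If $o_1(A)$ is finite, then, as $s$ is finite, the right-hand side is finite, and hence $\Delta_A < \infty$. Consequently $\phi_M \leq \Delta_A < \infty$ for every non-projective module $M$, which forces $Ext^{\phi_M}(M,M) \neq 0$; this is exactly the Tachikawa conjecture for $A$. For the final sentence I would simply apply Theorem \ref{theoremiyama}(b): if the representation dimension of $A$ is at most $3$, then $o_1(A)$ is finite, so the first part applies.

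I do not expect any genuine obstacle, since essentially all the content is already packaged in the preceding theorem. The only point requiring a moment's care is the bookkeeping of the supremum/infimum conventions: one must check that a \emph{finite} value of $\Delta_A$ really forces $\phi_M \neq \infty$ for each $M$, rather than merely bounding those $\phi_M$ that happen to be finite. This is immediate from the definition of the supremum, because the value $\infty$ would dominate any finite bound, so a finite $\Delta_A$ is incompatible with the existence of a non-projective $M$ having $\phi_M = \infty$.
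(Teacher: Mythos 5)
Your proposal is correct and takes essentially the same route the paper intends: the corollary is an immediate consequence of the preceding theorem (finiteness of $o_1(A)$ and of $s$ forces $\Delta_A < \infty$, hence $\phi_M < \infty$, i.e.\ $Ext^{i}(M,M) \neq 0$ for some $i$, for every non-projective $M$), combined with Theorem \ref{theoremiyama} for the representation-dimension-at-most-3 case. Your remark on the $\sup/\inf$ conventions is also handled correctly, since a single non-projective $M$ with $\phi_M = \infty$ would already make $\Delta_A = \infty$, contradicting the finite bound.
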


\section{Non-vanishing of certain Ext-groups}

In this chapter let an algebra always be a finite dimensional nonsemisimple selfinjective k-algebra, over a field $K$. To avoid trivialities, we futhermore assume that our algebras are not isomorphic to $K[x]/(x^2)$. Note that any algebra derived equivalent to $K[x]/(x^2)$ is in fact Morita equivalent to $K[x]/(x^2)$.
\begin{lemma}
Let $A$ and $B$ be selfinjective algebras.
\begin{enumerate}
\item[i)] If $F: \underline{mod-A} \rightarrow \underline{mod-B}$ is a stable equivalence, then $F\Omega_A = \Omega_B F$.
\item[ii)]$Ext^{i}(M,N) \cong \underline{Hom_A}(\Omega^{i}(M),N)$, for every $i \geq 1$ and modules $M,N$.
\end{enumerate}
\end{lemma}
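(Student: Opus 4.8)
I would handle the two parts independently: part (ii) is the standard dimension-shifting isomorphism, adapted so that the one genuinely selfinjective input is isolated, while part (i) requires recognizing the syzygy functor as an intrinsic feature of the stable category.

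For part (ii) I would induct on $i$. For $i=1$, fix a projective cover $0 \to \Omega M \to P \xrightarrow{} M \to 0$ and apply $\Hom_A(-,N)$; since $\Ext^1_A(P,N)=0$ this produces an exact sequence $\Hom_A(P,N) \to \Hom_A(\Omega M,N) \to \Ext^1_A(M,N) \to 0$. The image of the left-hand map is exactly the set of $g\colon \Omega M \to N$ factoring through the inclusion $\iota\colon \Omega M \hookrightarrow P$. The only place selfinjectivity enters is in showing this subspace coincides with the maps factoring through an arbitrary projective: if $g=ts$ with $s\colon \Omega M \to Q$ and $Q$ projective, then $Q$ is also injective, so $s$ extends along the monomorphism $\iota$ to some $\tilde s\colon P \to Q$, whence $g=t\tilde s\iota$ factors through $\iota$. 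Thus the cokernel is precisely $\underline{\Hom}_A(\Omega M,N)$, giving $\Ext^1_A(M,N)\cong \underline{\Hom}_A(\Omega M,N)$ naturally. For $i\geq 2$ I would combine the iterated dimension shift $\Ext^i_A(M,N)\cong \Ext^1_A(\Omega^{i-1}M,N)$ (valid over any ring since $\Ext^{\geq 1}_A(P,N)=0$) with the case $i=1$ applied to $\Omega^{i-1}M$, using $\Omega(\Omega^{i-1}M)\cong\Omega^i M$ in the stable category.

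For part (i) the naive idea of transporting the defining sequence $0\to\Omega_A M\to P\to M\to 0$ through $F$ fails, since a stable equivalence need neither preserve short exact sequences nor send projectives to projectives. Instead I would characterize the shift intrinsically. By Happel's theorem $\underline{\mathrm{mod}}A$ and $\underline{\mathrm{mod}}B$ are $\Hom$-finite $K$-linear triangulated categories with suspension $\Omega^{-1}$; since $\mathrm{mod}A$ has almost split sequences and projectives coincide with injectives, both categories admit a Serre functor $S$ and an Auslander--Reiten translate $\tau$, related by $S\cong\tau\circ\Omega^{-1}$. The plan is then: (a) $F$ commutes with the Serre functor, $FS_A\cong S_B F$, because the Serre functor is determined up to canonical isomorphism by the natural duality $\Hom(X,Y)\cong D\Hom(Y,SX)$, a purely $K$-linear datum preserved by any equivalence (transport it through $F$ and apply Yoneda); and (b) $F$ commutes with $\tau$, $F\tau_A\cong\tau_B F$, because a stable equivalence preserves the radical of the category, hence irreducible morphisms and almost split sequences. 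Combining (a) and (b) with $\Omega^{-1}\cong\tau^{-1}S$ gives $F\Omega_A^{-1}\cong\Omega_B^{-1}F$, and passing to quasi-inverses yields $F\Omega_A\cong\Omega_B F$.

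The main obstacle is part (i), and within it the need to avoid circularity: I must obtain $F\tau_A\cong\tau_B F$ from the additive structure alone (via the radical and irreducible maps, as in the classical Auslander--Reiten treatment of stable equivalence), rather than from $\tau\cong S\circ\Omega$, which already involves the shift I am trying to control. The clean inputs that make this work are the identity $S\cong\tau\circ\Omega^{-1}$ of Reiten--Van den Bergh and the uniqueness of a Serre functor, forcing any $K$-linear equivalence to respect it; granting these, the remainder is formal.
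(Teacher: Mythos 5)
Your part (ii) is correct and complete: the dimension shift is standard, and you correctly isolate the one place where selfinjectivity enters, namely that a map out of $\Omega M$ factoring through an arbitrary projective $Q$ already factors through the embedding $\Omega M \hookrightarrow P$, because $Q$ is also injective. This is in substance a proof of the result the paper merely cites (\cite{SkoYam}, Chapter IV, Theorem 9.6); note the paper gives no argument of its own for either part, citing \cite{ARS}, Chapter X, Proposition 1.12 for part (i).

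Part (i) is where the genuine gap lies. Your step (a) ($F$ commutes with the Serre functor) is sound and purely $K$-linear, and the reduction via $\Omega^{-1} \cong \tau^{-1}S$ is the right skeleton -- this is essentially how the classical argument is organized. But step (b), $F\tau_A \cong \tau_B F$, carries the entire content of the theorem, and the justification you offer is not valid: almost split sequences are defined by the exact structure of $mod-A$, which a stable equivalence does not see, and ``preserving the radical of the category'' does not recover them, because the middle term of an almost split sequence can be projective, in which case the sequence is invisible in $\underline{mod-A}$. This is not a removable technicality. Take $A=B=KQ/\rad^2$ with $Q$ an oriented cycle with $n\geq 3$ vertices, a connected selfinjective Nakayama algebra: every almost split sequence has the form $0 \to S_{i+1} \to P_i \to S_i \to 0$ with projective middle term, and $\underline{\Hom}(S_i,S_j)=0$ for $i \neq j$, so $\underline{mod-A}$ is a product of $n$ copies of the category of $K$-vector spaces and any permutation of the $n$ simples defines a stable self-equivalence $F$. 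Here $\tau$ and $\Omega$ both act as the cyclic shift $S_i \mapsto S_{i+1}$, so a transposition gives a stable equivalence with $F\tau \neq \tau F$ and $F\Omega \neq \Omega F$; meanwhile the Serre functor $\tau\Omega^{-1}$ is the identity, so your step (a) holds while step (b) and the conclusion fail. Hence (b) cannot be obtained ``from the additive structure alone'': any correct proof must invoke the Auslander--Reiten theorems on preservation of almost split sequences under stable equivalence, which carry exactly the node-excluding hypotheses (ruling out these radical-square-zero algebras) that are hidden inside the paper's citation of \cite{ARS}. Granting those hypotheses and that theorem, the rest of your argument closes; without them, both your step (b) and the lemma as literally stated are false.
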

\begin{proof}
\begin{enumerate}
\item See \cite{ARS}, chapter X proposition 1.12.
\item See \cite{SkoYam}, chapter IV. theorem 9.6.

\end{enumerate}
\end{proof}

\begin{theorem}
If two selfinjective algebras $A$ and $B$ are derived equivalent or stable equivalent, then $\Delta_A = \Delta_B$.
\begin{proof}
Since a derived equivalence between selfinjective algebras induces a stable equivalence, we just have to show that we have $\Delta_A = \Delta_B$ in case $A$ and $B$ are stable equivalent. \newline
Let $M$ be a nonprojective $A-module$ and $F:\underline{mod-A} \rightarrow \underline{mod-B}$ be a stable equivalence.
Using the previous lemma, one obtains:
$$Ext_{B}^{r}(F(M),F(M)) \cong \underline{Hom}_B(\Omega_B^{r}(F(M)),F(M)) \cong  F(\underline{Hom}_A(\Omega_A^{r}(M),M)) \neq 0.$$
Thus we have $\Delta_B \geq \Delta_A$. By symmetry we obtain: $\Delta_A = \Delta_B$.
\end{proof}
\end{theorem}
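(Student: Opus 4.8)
The plan is to reduce to the stable-equivalence case and then transport the invariant $\phi_M$ across the equivalence. First I would use the fact, due to Rickard, that a derived equivalence between selfinjective algebras induces a stable equivalence; this reduces the statement to proving $\Delta_A = \Delta_B$ whenever there is a stable equivalence $F: \underline{mod-A} \rightarrow \underline{mod-B}$. Since for a selfinjective algebra we have the description $\Delta_A = \sup\{\phi_M \mid M \text{ is non-projective}\}$, the theorem will follow once I show that $F$ preserves $\phi_M$ and induces a bijection between the relevant classes of modules.

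The key step is to prove $\phi_{F(M)} = \phi_M$ for every non-projective $A$-module $M$, and for this I would chain together the two parts of the preceding lemma. Part (ii) rewrites self-extensions as stable homomorphisms out of syzygies, $Ext^r_A(M,M) \cong \underline{Hom}_A(\Omega^r_A(M), M)$, and similarly over $B$. Part (i), that $F$ commutes with the syzygy functor, gives after iteration $\Omega^r_B(F(M)) \cong F(\Omega^r_A(M))$. Finally, $F$ is an equivalence and hence fully faithful, so it induces isomorphisms on stable Hom-spaces. Combining these three facts yields $Ext^r_B(F(M), F(M)) \cong Ext^r_A(M,M)$ for every $r \geq 1$. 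In particular the two Ext-groups vanish in the same degrees, so $\phi_{F(M)} = \phi_M$.

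It then remains to pass from this per-module identity to the suprema. Because $Ext^r$ for $r \geq 1$ kills projective summands, $\phi_M$ depends only on the class of $M$ in the stable category, so it descends to a function on objects of $\underline{mod-A}$. Since $F$ is essentially surjective and carries non-projective modules to non-projective modules (a module is zero in the stable category exactly when it is projective, and $F$ preserves the zero object), $M \mapsto F(M)$ gives a bijection between stable isomorphism classes of non-projective $A$- and $B$-modules along which $\phi$ is preserved. Hence the sets $\{\phi_M\}$ and $\{\phi_N\}$ agree and so do their suprema, giving $\Delta_A = \Delta_B$; alternatively one reads off $\Delta_B \geq \Delta_A$ directly from $\phi_{F(M)} = \phi_M$ and then invokes the symmetry of the hypothesis.

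The main obstacle I anticipate is the bookkeeping at the interface between the module-level definition of $\phi_M$ and the purely stable-categorical input: I must be sure that the iterated identity $\Omega^r_B F \cong F \Omega^r_A$ and the fully faithfulness of $F$ are applied to genuine module representatives, and that the supremum is legitimately taken over all non-projective modules rather than merely indecomposables or generator-cogenerators. Once these points are handled, the result is a formal consequence of the two lemmas.
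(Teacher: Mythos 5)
Your proposal is correct and follows essentially the same route as the paper: reduce the derived case to a stable equivalence, then combine the lemma's two parts ($F\Omega_A = \Omega_B F$ and $Ext^r \cong \underline{Hom}$ out of syzygies) with the full faithfulness of $F$ to transport self-extensions, and conclude via the description of $\Delta_A$ as a supremum of $\phi_M$ over non-projective modules. The extra care you take in checking that $\phi$ descends to stable isomorphism classes and that $F$ induces a bijection on non-projective objects is implicit in the paper's shorter argument, which simply notes $Ext^r_B(F(M),F(M)) \neq 0$ whenever $Ext^r_A(M,M) \neq 0$ and invokes symmetry.
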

\begin{theorem}
Assume $A$ is symmetric.
For a nontrivial twosided ideal $X$ of $A$ with $Hom_A(X,A/X) \neq 0$ the following holds: $\phi_X=1$.
\end{theorem}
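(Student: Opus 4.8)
The plan is to study $\Ext^1_A(X,X)$ via the short exact sequence $0 \to X \to A \xrightarrow{p} A/X \to 0$ of right $A$-modules and to exploit the self-duality of the symmetric algebra $A$. Since $\phi_X \ge 1$ always holds, it is enough to prove $\Ext^1_A(X,X) \neq 0$.

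First I would apply $\Hom_A(X,-)$ to this sequence. Because $A$ is symmetric, it is self-injective, so $A$ is injective as a module and $\Ext^1_A(X,A)=0$. Thus the long exact sequence collapses to
\[ 0 \to \Hom_A(X,X) \to \Hom_A(X,A) \xrightarrow{p_*} \Hom_A(X,A/X) \to \Ext^1_A(X,X) \to 0, \]
which identifies $\Ext^1_A(X,X)$ with $\operatorname{coker}(p_*)$. As $\Hom_A(X,A/X)\neq 0$ by hypothesis, it suffices to show that $p_*$ fails to be surjective; in fact I expect $p_*=0$.

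The heart of the argument is to show that every homomorphism $g\colon X \to A$ of right modules is left multiplication $L_a\colon x\mapsto ax$ by some $a\in A$. On one side, $A\cong D(A)$ as bimodules yields $\Hom_A(X,A)\cong \Hom_A(X,D(A))\cong D(X)$, hence $\dim_K \Hom_A(X,A)=\dim_K X$. On the other side, left multiplication gives an injection $A/\operatorname{Ann}_\ell(X)\hookrightarrow \Hom_A(X,A)$, where $\operatorname{Ann}_\ell(X)=\{a\in A : aX=0\}$ is the left annihilator. Using that the defining form $\langle-,-\rangle$ is symmetric, associative and nondegenerate and that $X$ is a two-sided ideal, I would establish the orthogonality identity $\operatorname{Ann}_\ell(X)=X^{\perp}$, giving $\dim_K A/\operatorname{Ann}_\ell(X)=\dim_K A-\dim_K X^{\perp}=\dim_K X$. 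An injective linear map between $K$-spaces of the same finite dimension is an isomorphism, so $\Hom_A(X,A)$ consists precisely of the left multiplications $L_a$.

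Finally, since $X$ is a left ideal, $L_a(X)=aX\subseteq X$ for each $a$, so $p\circ g = p\circ L_a = 0$ for every $g$; that is, $p_*=0$. Hence $\Ext^1_A(X,X)\cong \Hom_A(X,A/X)\neq 0$ and $\phi_X=1$. The main obstacle is precisely the middle step, namely that $\Hom_A(X,A)$ contains nothing but left multiplications; this comes down to matching the two dimension counts, and the genuinely delicate ingredient is the identity $\operatorname{Ann}_\ell(X)=X^{\perp}$, which is exactly where the symmetric (not merely self-injective) hypothesis and the two-sidedness of $X$ are used.
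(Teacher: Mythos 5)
Your proof is correct, but the key step is genuinely different from the paper's. Both arguments start identically: apply $\Hom_A(X,-)$ to $0 \to X \to A \to A/X \to 0$, use self-injectivity to kill $\Ext^1_A(X,A)$, and reduce to showing that $p_*\colon \Hom_A(X,A) \to \Hom_A(X,A/X)$ is not surjective; and both in effect prove the stronger statement that every homomorphism $X \to A$ has image inside $X$, so that $p_*=0$ and $\Ext^1_A(X,X) \cong \Hom_A(X,A/X) \neq 0$. The difference lies in how this is established. The paper argues by a pure dimension count on the module-category side: dualizing the bimodule sequence and using $D(A)\cong A$ as bimodules, it realizes $D(A/X)$ as a twosided ideal $I$, identifies $\Hom_A(X,X)\cong \Hom_A(A/I,A/I)\cong A/I$, and concludes $\dim \Hom_A(X,X)=\dim X=\dim \Hom_A(X,A)$, so exactness of the truncated sequence would force $\Hom_A(X,A/X)=0$. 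You instead work directly with the trace form: the injection $A/\operatorname{Ann}_\ell(X)\hookrightarrow \Hom_A(X,A)$ by left multiplications, the identity $\operatorname{Ann}_\ell(X)=X^{\perp}$, and the two dimension counts show that $\Hom_A(X,A)$ consists exactly of left multiplications $L_a$, which land in $X$ because $X$ is a left ideal. Your route yields a double-centralizer-type fact of independent interest and makes the isomorphism $\Hom_A(X,A/X)\cong \Ext^1_A(X,X)$ canonical (via the connecting map), rather than only an equality of dimensions. One further observation: your orthogonality identity actually needs only associativity and nondegeneracy of the form together with $X$ being a right ideal (symmetry of the form is never invoked; two-sidedness is used a second time only through $aX\subseteq X$), so your argument extends verbatim to Frobenius algebras, whereas the paper's use of $D(A)\cong A$ as \emph{bimodules} is tied to the symmetric case.
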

\begin{proof}
We have a short exact sequence of $A$-bimodules:
$$0 \rightarrow X \rightarrow A \rightarrow A/X \rightarrow 0 (*)$$
and applying $Hom_A(X,-)$, we get the exact sequence: 
$$0 \rightarrow Hom_A(X,X) \rightarrow Hom_A(X,A) \rightarrow Hom_A(X,A/X) \rightarrow Ext_A^{1}(X,X) \rightarrow 0.$$
From this exact sequence we conclude that we have a short exact sequence
$$0 \rightarrow Hom_A(X,X) \rightarrow Hom_A(X,A) \rightarrow Hom_A(X,A/X) \rightarrow 0$$
iff $Ext_A^{1}(X,X)=0$.
The last sequence being short exact is equivalent to:
$$dim(X)=dim(Hom_A(X,A))=dim(Hom_A(X,X))+dim(Hom_A(X,A/X)).(**)$$
Dualising $(*)$ gives the short exact sequence of $A$-bimodules:
$$0 \rightarrow D(A/X) \rightarrow D(A) \rightarrow D(X) \rightarrow 0.$$
Using that $D(X)$ is a bimodule and $D(A) \cong A$ as $A$-bimodules, we conclude that $D(A/X) \cong I$ as $A$-bidmodules, for a twosided ideal $I$. We then have $Hom_A(X,X) \cong Hom_A(D(X),D(X)) \cong Hom_A(A/I,A/I) \cong A/I$ and from this we have $dim(Hom_A(X,X))=dim(A/I)=dim(A)-dim(I)=dim(A)-dim(D(A/X))=dim(A)-(dim(A)-dim(X))=dim(X).$
Since we have $dim(Hom_A(X,A/X)) \neq 0$ we conclude that $(**)$ can not hold and thus we have $Ext_A^{1}(X,X) \neq 0$.
\end{proof}
\begin{corollary}\label{idealcor}
For a nontrivial twosided ideal $X \neq A$ of a local symmetric k-algebra A we have $Hom_A(X,A/X) \neq 0$ and thus $\phi_X=1$.
\end{corollary}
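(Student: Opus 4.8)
The plan is to reduce everything to the theorem proved just above: that theorem already delivers $\phi_X=1$ as soon as we know $\Hom_A(X,A/X)\neq 0$, so the entire content of the corollary is to check that this Hom-space is automatically nonzero once $A$ is local and $0\neq X\subsetneq A$. Thus I would spend no effort on $\phi_X$ itself and concentrate on producing one nonzero homomorphism $X\to A/X$.

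First I would record what locality buys us. Since $A$ is finite dimensional and local, $A/\rad A$ is a division algebra, so $A$ has a unique simple right module $S\cong A/\rad A$; consequently every nonzero semisimple right module is a direct sum of copies of $S$. This is the only structural fact I need, and it is the reason the symmetry hypothesis plays no role in this step (it is consumed entirely inside the theorem being cited).

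Next I would build the two ends of a nonzero map. Because $X\neq A$, the quotient $A/X$ is nonzero and hence has nonzero socle; as $S$ is the only simple module, this produces an injection $\iota\colon S\hookrightarrow A/X$. Dually, because $X\neq 0$, Nakayama's lemma gives $\top X=X/X\rad A\neq 0$, and since this top is a nonzero semisimple module it admits a surjection $p\colon X\twoheadrightarrow S$. Composing, $\iota\circ p\colon X\to A/X$ is a surjection followed by an injection, hence a nonzero homomorphism, so $\Hom_A(X,A/X)\neq 0$. Finally I would invoke the preceding theorem: $A$ is symmetric and $X$ is a nontrivial two-sided ideal with $\Hom_A(X,A/X)\neq 0$, whence $\phi_X=1$.

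There is essentially no obstacle here; the one point to keep an eye on is that the hypothesis ``nontrivial, $\neq A$'' must be read as $0\neq X\subsetneq A$, for it is precisely $X\neq 0$ that makes $\top X\neq 0$ and $X\neq A$ that makes $\soc(A/X)\neq 0$. Once both are nonzero, the unique simple $S$ appears simultaneously in $\top X$ and in $\soc(A/X)$, and a nonzero map between $X$ and $A/X$ always exists. If one wished, one could equally phrase the socle side intrinsically by noting $\soc A=S\subseteq X$, but this is not needed for the argument above.
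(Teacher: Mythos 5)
Your proposal is correct and follows exactly the paper's own argument: locality yields a unique simple module $S$, which appears both as a quotient of $X$ (its top) and inside the socle of $A/X$, so the composite $X\twoheadrightarrow S\hookrightarrow A/X$ is a nonzero homomorphism, and the preceding theorem then gives $\phi_X=1$. Your write-up merely spells out the Nakayama-lemma and socle details that the paper leaves implicit.
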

\begin{proof}
Since there is a unique simple module, there is a map which maps from $X$ to the simple module which is a part of the socle of $A/X$. Thus $Hom_A(X,A/X) \neq 0$ and we can apply the above theorem.
\end{proof}

\begin{corollary}\label{commutativecase}
Let $A$ be a local symmetric algebra that is also commutative with enveloping algebra $A^{e}$. Then $Ext_{A^{e}}^{1}(A,A) \neq 0$.
\end{corollary}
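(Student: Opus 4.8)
The plan is to realise $A$ as a cyclic module over its enveloping algebra $A^{e}=A\otimes_{K}A^{op}$ and then to reduce the claim to Corollary \ref{idealcor}. Since $A$ is commutative we have $A^{op}=A$, hence $A^{e}=A\otimes_{K}A$, and because the tensor product of two symmetric algebras is again symmetric, $A^{e}$ is symmetric. The multiplication map $\mu\colon A^{e}\to A$, $a\otimes b\mapsto ab$, is a surjective homomorphism of algebras (here commutativity is needed so that $\mu$ is multiplicative), and in particular a surjection of right $A^{e}$-modules; its kernel $I:=\ker\mu$ is therefore a twosided ideal of $A^{e}$ with $A^{e}/I\cong A$ as right $A^{e}$-modules. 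Since $\dim_{K}A\geq 2$ we have $(\dim_{K}A)^{2}>\dim_{K}A$, so $0\neq I\neq A^{e}$ and $I$ is a nontrivial proper twosided ideal. Note also that $A^{e}$ is nonsemisimple and, having dimension $(\dim_K A)^2\neq 2$, is not isomorphic to $K[x]/(x^{2})$, so the standing conventions of this section are met.

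Granting that $A^{e}$ is local (see below), $I$ is contained in the unique maximal ideal $\rad(A^{e})$, so $\mu$ is a projective cover of $A$ and the sequence $0\to I\to A^{e}\to A\to 0$ identifies $I$ with the syzygy $\Omega_{A^{e}}(A)$ in the stable category of the selfinjective algebra $A^{e}$. Next I would apply Corollary \ref{idealcor} to the local symmetric algebra $A^{e}$ and the ideal $X=I$, obtaining $\phi_{I}=1$, that is $\Ext^{1}_{A^{e}}(I,I)\neq 0$.

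It then remains to transport this non-vanishing from $I=\Omega_{A^{e}}(A)$ back to $A$. Over the selfinjective algebra $A^{e}$ the syzygy functor is an equivalence of the stable category, so $\Ext^{i}_{A^{e}}(M,N)\cong \Ext^{i}_{A^{e}}(\Omega M,\Omega N)$ for all $i\geq 1$ and all modules $M,N$ (this is exactly the computation already used in the $1$-Extsymmetric argument earlier in the paper). Applying it with $M=N=A$ gives $\Ext^{1}_{A^{e}}(A,A)\cong \Ext^{1}_{A^{e}}(I,I)\neq 0$, which is the assertion of \ref{commutativecase}.

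I expect the main obstacle to be precisely the locality of $A^{e}$. Writing $k=A/\rad(A)$, the semisimple quotient of $A^{e}$ is $k\otimes_{K}k$, which is local exactly when $k/K$ is purely inseparable; thus $A^{e}$ is local whenever the residue field equals $K$, in particular when $K$ is algebraically closed, but it may fail to be local in general. In that degenerate case $\mu$ need not be a projective cover and $I$ is only a direct summand of $\Omega_{A^{e}}(A)$, so one would argue separately: one checks directly that $\Hom_{A^{e}}(I,A)\neq 0$ (using $A\cong D(A)$ as a bimodule one gets $\Hom_{A^{e}}(I,A)\cong D(I\otimes_{A^{e}}A)\cong D(HH_{1}(A))\cong D(\Omega^{1}_{A/K})$, which is nonzero because the nonsemisimple local algebra $A$ is not \'etale over $K$) and then invokes the preceding Theorem on ideals in symmetric algebras rather than its local corollary, before concluding again via the syzygy identity. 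I would present the clean argument under the harmless normalisation $A/\rad(A)\cong K$ and remark on this refinement.
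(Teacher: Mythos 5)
Your proof is correct, but it is not the paper's argument; the two proofs run in opposite directions. The paper \emph{embeds} $A$ into $A^{e}$ rather than presenting it as a quotient: it shows that $\soc_{A^{e}}(A)=\soc(A)$ is simple, so that over the selfinjective algebra $A^{e}$ the module $A$ embeds into an indecomposable projective(-injective) summand of $A^{e}$, and since $A^{e}$ is commutative the image of this embedding is a twosided ideal; Corollary \ref{idealcor} applied to $X\cong A$ then yields $\Ext_{A^{e}}^{1}(A,A)\neq 0$ at once, with no transport step. You instead realise $A$ as $A^{e}/I$ via the multiplication map, apply \ref{idealcor} to $I=\Omega_{A^{e}}(A)$, and carry the non-vanishing back through the stable equivalence $\Omega$ (valid over the selfinjective algebra $A^{e}$). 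What your route buys: the surjection $\mu$ is canonical, needing no socle analysis or choice of embedding, and your insistence on checking locality of $A^{e}$ is a genuine contribution --- \ref{idealcor} is stated only for \emph{local} symmetric algebras, and the paper quotes it for $A^{e}$ without verifying this, so the paper's proof tacitly carries the same restriction (namely $A/\rad(A)$ purely inseparable over $K$; for instance $A=\mathbb{C}[x]/(x^{2})$ over $K=\mathbb{R}$ has $A^{e}$ non-local and even disconnected). What the paper's route buys: it is shorter, avoids the syzygy identity entirely, and gets the conclusion for $A$ itself rather than for its syzygy. Your sketched repair of the non-local case --- verifying $\Hom_{A^{e}}(I,A^{e}/I)\cong D(HH_{1}(A))\cong D(\Omega^{1}_{A/K})\neq 0$ and invoking the unnumbered theorem instead of its local corollary --- is sound and would actually close a gap the paper leaves open.

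Two details in your degenerate-case remark should be fixed. First, the containment is backwards: when $A^{e}$ is not local, $\mu$ splits as the projective cover of $A$ plus a projective complement, so $I\cong\Omega_{A^{e}}(A)\oplus Q$ with $Q$ projective; that is, $\Omega_{A^{e}}(A)$ is a direct summand of $I$, not the other way round. This is harmless, since $Q$ is projective-injective over the selfinjective algebra $A^{e}$ and hence $\Ext_{A^{e}}^{1}(I,I)\cong\Ext_{A^{e}}^{1}(\Omega_{A^{e}}(A),\Omega_{A^{e}}(A))\cong\Ext_{A^{e}}^{1}(A,A)$. Second, the identification $I\otimes_{A^{e}}A\cong HH_{1}(A)$ needs the observation that $I$ annihilates $A$: by commutativity, any $z\in A^{e}$ acts on $A$ as multiplication by $\mu(z)$, so $IA=0$, whence $A^{e}\otimes_{A^{e}}A\rightarrow A\otimes_{A^{e}}A$ is an isomorphism and the long exact Tor-sequence gives $I\otimes_{A^{e}}A\cong \mathrm{Tor}_{1}^{A^{e}}(A,A)=HH_{1}(A)$. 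Finally, note that a non-local $A^{e}$ may be disconnected, so the unnumbered theorem is then being applied outside the standing conventions of Section 3; its proof nowhere uses connectedness, so this is only a cosmetic point, but it deserves a remark.
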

\begin{proof}
We show that $A$ is a twosided ideal of $A^{e}$ and then we can apply \ref{idealcor} to show the result. But the socle of $A$ is a two-sided ideal of $A$ and is also simple, since $A$ is selfinjective and local. We have that $soc_{A^{e}}(A)$ is a subbimodule of $soc(A)$ in general, since the sum of simple subbimodules of $A$ is a submodule of the sum of simple right modules. But since  $soc_{A^{e}}(A)$ is non-trivial, one obtains $soc_{A^{e}}(A)=soc(A)$, which is simple. Thus $A$ is a two sided ideal in the commutative algebra $A^{e}$.  

\end{proof}
\begin{example} \label{examples}
We give some examples, showing that the previous corollary \ref{idealcor} can be applied to show that certain algebras coming from applied representation theory have dominant dimension exactly two.
\begin{enumerate} 
\item[i)]The indecomposable modules of a local Nakayama algebra $k[X]/(X^n)$ are all twosided ideal of the form $(X^k)/(X^n)$ and we can thus apply \ref{idealcor} to get $\Delta_{k[X]/(X^n)}=1$. 
Thus all algebras of the form $End_{k[X]/(X^n)}(M)$ have dominant dimension 2 in case $M$ is a non-projective generator.
\item[ii)] Category $\mathcal{O}$ blocks have dominant dimension 2, since they are isomorphic to endomorphism rings of generators over a local symmetric commutative algebra having a two-sided ideal as a direct summand, see for example \cite{KSX} and \cite{Hum} for more information on those algebras.
\item[iii)] 1-quasi-hereditary algebras with a special duality in the sense of \cite{Pu} Theorem B have dominant dimension equal to two for the same reason as blocks of category $\mathcal{O}$.
\end{enumerate}
\end{example}

\begin{lemma}\label{hopflemmas}
For a finite dimensional Hopf algebra $A$ and $A$-modules $M_1$, $M_2$ and $M_3$, then the following holds:
\begin{enumerate}
\item[i)]$Ext_A^{i}(M_1 \otimes_k M_2 , M_3) \cong Ext_A^{i}(M_1,Hom_k(M_2,M_3))$, for every $i \geq 1$.
\item[ii)]$Hom_A(M_1,M_2) \cong M_1^{*} \otimes_k M_2$
\item[iii)] $M_1$ is projective iff $M_1 \otimes_k M_1^{*}$ is projective.
\item[iv)]Let now $A=kG$ be group algebra over an algebraic closed field of characteristic p. $M_1^{*} \otimes_k M_1$ has the trivial module $K$ as a direct summand iff $p$ does not divide $dim(M_1)$.
\item[v)]$A$ is selfinjective.
\end{enumerate}
\end{lemma}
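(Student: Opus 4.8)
The plan is to read all of these off from the monoidal structure of $mod\text{-}A$. For a finite dimensional Hopf algebra $A$ with comultiplication $\Delta$, counit $\varepsilon$ and antipode $S$, the category $mod\text{-}A$ is rigid monoidal: $M\otimes_k N$ carries the diagonal action through $\Delta$, the dual $M^{*}=\Hom_k(M,k)$ carries the action twisted by $S$, and $\Hom_k$ is the internal hom. Two inputs drive everything. First, the \emph{tensor identity}: for any $A$-module $V$ the map $A\otimes_k V\to A\otimes_k V_{\mathrm{triv}}$, $a\otimes v\mapsto\sum a_{(1)}\otimes S(a_{(2)})v$, is an $A$-isomorphism, so $A\otimes_k V$ is free of rank $\dim_k V$; consequently $-\otimes_k V$ sends projectives to projectives and (being tensoring over a field) is exact. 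Second, \emph{rigidity}: there are evaluation and coevaluation maps $\mathrm{ev}\colon M^{*}\otimes_k M\to k$ and $\mathrm{coev}\colon k\to M\otimes_k M^{*}$ with $(\mathrm{id}_M\otimes\mathrm{ev})\circ(\mathrm{coev}\otimes\mathrm{id}_M)=\mathrm{id}_M$. Part (v) is independent: it is the Larson--Sweedler theorem that a finite dimensional Hopf algebra is Frobenius, hence self-injective, and I would just cite it (\cite{SkoYam}).

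For (ii) I would write down the canonical pairing $M_1^{*}\otimes_k M_2\to\Hom_k(M_1,M_2)$, $\varphi\otimes m\mapsto(x\mapsto\varphi(x)m)$, which is bijective in finite dimensions, and check it is $A$-linear for the diagonal action on the source and the conjugation action on the target; this is a short computation using $\sum S(a_{(1)})a_{(2)}=\varepsilon(a)1$. (Here $\Hom_k(M_1,M_2)$ carries its natural $A$-module structure.) Part (i) is then the tensor--hom adjunction lifted to $\Ext$. At the level of Hom one has the natural isomorphism $\Hom_A(X\otimes_k M_2,M_3)\cong\Hom_A(X,\Hom_k(M_2,M_3))$. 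Choosing a projective resolution $P_\bullet\to M_1$, the tensor identity makes $P_\bullet\otimes_k M_2$ a projective resolution of $M_1\otimes_k M_2$, and applying $\Hom_A(-,M_3)$ together with the naturality of the adjunction identifies the two cochain complexes computing $\Ext_A^{i}(M_1\otimes_k M_2,M_3)$ and $\Ext_A^{i}(M_1,\Hom_k(M_2,M_3))$; taking cohomology gives (i).

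Part (iii) is where rigidity does the work. If $M_1$ is projective it is a summand of a free module, so $M_1\otimes_k M_1^{*}$ is a summand of $A^{n}\otimes_k M_1^{*}$, which is free by the tensor identity; hence $M_1\otimes_k M_1^{*}$ is projective. For the converse the zig-zag identity exhibits $M_1$ as a direct summand of $(M_1\otimes_k M_1^{*})\otimes_k M_1$, and if $M_1\otimes_k M_1^{*}$ is projective then this triple product is projective by the tensor identity again, so $M_1$ is a summand of a projective and therefore projective. For (iv) I would identify $M_1^{*}\otimes_k M_1\cong\End_k(M_1)$ with the conjugation $G$-action, so that $\mathrm{coev}$ sends $1$ to $\mathrm{id}_{M_1}$ and the evaluation becomes the trace $\mathrm{tr}\colon\End_k(M_1)\to k$; both are $kG$-linear. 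Their composite $k\to k$ is multiplication by $\dim_k M_1$, so if $p\nmid\dim_k M_1$ then $\tfrac{1}{\dim_k M_1}\mathrm{tr}$ splits $\mathrm{coev}$ and the trivial module is a direct summand. For the reverse implication I would use that the trace form $(f,g)\mapsto\mathrm{tr}(fg)$ is a non-degenerate $G$-invariant pairing, so a $kG$-splitting of a trivial summand is given by $\mathrm{tr}(b\,\cdot\,)$ for some $b\in\End_{kG}(M_1)$; evaluating on $\mathrm{id}_{M_1}$ gives $\mathrm{tr}(b)=1$, and writing $b=\lambda\,\mathrm{id}+n$ with $n$ in the (nilpotent) radical of the local ring $\End_{kG}(M_1)$ yields $\lambda\dim_k M_1=1$, so $p\nmid\dim_k M_1$.

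Parts (ii) and (v) are routine; the real content is coordinating the adjunction of (i) with derived functors, for which the tensor identity is exactly the hypothesis that makes $-\otimes_k M_2$ exact and projective-preserving. I expect the main obstacle to be the reverse implication of (iv). It genuinely requires $M_1$ to be indecomposable: for $M_1=k\oplus k$ in characteristic $2$ the trivial module is a summand of $M_1^{*}\otimes_k M_1$ although $p\mid\dim_k M_1$, and the locality of $\End_{kG}(M_1)$ (which forces the radical part to have vanishing trace) is precisely what rescues the argument in the indecomposable case relevant to the application.
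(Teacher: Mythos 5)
Your proof is correct in substance, but it takes a genuinely different route from the paper: the paper does not prove this lemma at all, it simply cites \cite{SkoYam} (Ch.\ VI, Thm.\ 6.4 and Exercises 24, 27, Thm.\ 3.6) and \cite{Ben} (Prop.\ 3.1.8, Thm.\ 3.1.9), whereas you rederive everything from the rigid monoidal structure of $mod\text{-}A$ (tensor identity, internal hom adjunction lifted along a projective resolution, evaluation/coevaluation, and the trace argument). What your approach buys is precisely that it exposes the hypotheses: you correctly detected that part (iv), as stated in the lemma, is \emph{false} in the reverse direction for decomposable modules ($M_1=k\oplus k$ in characteristic $2$), and indeed Benson's Theorem 3.1.9 --- the paper's cited source --- carries an indecomposability hypothesis that the lemma silently drops. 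This does not damage the paper, since the only place (iv) is used (the theorem $\phi_M\leq\phi_k$ for $p\nmid \dim M$) needs only the forward implication, which your splitting $\tfrac{1}{\dim M_1}\mathrm{tr}$ of $\mathrm{coev}$ proves for arbitrary $M_1$; but it is a real defect of the statement that your blind reconstruction caught and the citation-only proof hides. Two small repairs on your side. First, in (ii) the map $\varphi\otimes m\mapsto(x\mapsto\varphi(x)m)$ on $M_1^{*}\otimes_k M_2$ is $A$-linear only when $A$ is cocommutative (e.g.\ a group algebra); for a general Hopf algebra the two legs of the comultiplication come out in the wrong order, and the natural $A$-isomorphism is $\Hom_k(M_1,M_2)\cong M_2\otimes_k M_1^{*}$ --- a harmless reordering, and the paper itself is loose here (it even writes $\Hom_A$ where $\Hom_k$ is meant and uses both orders $M\otimes M^{*}$, $M^{*}\otimes M$ interchangeably). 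Second, in the converse of (iv) you evaluate the projection $\mathrm{tr}(b\,\cdot\,)$ at $\mathrm{id}_{M_1}$, but the inclusion of the trivial summand need not be $\mathrm{coev}$; it is spanned by some $a\in\End_{kG}(M_1)$ with $\mathrm{tr}(ba)=1$, and you should run your locality argument on the element $ba=\lambda\,\mathrm{id}+n$ instead of on $b$ --- the conclusion $\lambda\dim_k M_1=1$, hence $p\nmid\dim_k M_1$, is unchanged.
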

\begin{proof}
\begin{enumerate}
\item See \cite{SkoYam}, theorem 6.4. for $i$=0 and for $i>0$ the proof is as in proposition 3.1.8. (ii) of \cite{Ben}.
\item See \cite{SkoYam}, chapter VI. exercise 24.
\item See \cite{SkoYam}, chapter VI. exercise 27.
\item See \cite{Ben}, theorem 3.1.9.
\item See \cite{SkoYam}, theorem 3.6.
\end{enumerate}
\end{proof}
The following is a generalisation of a result of Tachikawa from \cite{Ta} (who proved it for group algebras of $p$-groups). The proof is similar but shorter, since we use the previous lemma.
\begin{theorem} \label{tachtheo}
For a local nonsemisimple finite dimensional Hopf algebra $A$ we have $\Delta_A=1$.
\end{theorem}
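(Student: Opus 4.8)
The plan is to reduce the statement $\Delta_A = 1$ to the single assertion that $\Ext^1_A(M,M) \neq 0$ for every non-projective $M$, and then to prove this by transporting the self-extension of $M$ to a first extension into the trivial module via the Hopf tensor identities. First I would recall that $A$ is selfinjective by Lemma \ref{hopflemmas}(v), so that $\Delta_A = \sup\{\phi_M \mid M \text{ non-projective}\}$; since $\phi_M \geq 1$ by definition, it suffices to show $\phi_M = 1$, i.e. $\Ext^1_A(M,M) \neq 0$, for each non-projective $M$. The supremum is then attained and equals $1$ because $A$ is nonsemisimple and hence admits non-projective modules (for instance its unique simple module).

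The central reduction uses the trivial module $K$ given by the counit. Since $A$ is local, it has a unique simple module up to isomorphism, and $K$ is one-dimensional, hence simple; therefore $K$ is that unique simple module and $A/\rad A \cong K$. Applying the Hopf adjunction of Lemma \ref{hopflemmas}(i) with $M_1 = M$, $M_2 = M^*$, $M_3 = K$, together with $\Hom_k(M^*, K) = (M^*)^* \cong M$ coming from Lemma \ref{hopflemmas}(ii), I obtain a natural isomorphism $\Ext^1_A(M, M) \cong \Ext^1_A(M \otimes_k M^*, K)$. By Lemma \ref{hopflemmas}(iii) the module $N := M \otimes_k M^*$ is non-projective precisely because $M$ is, so the problem becomes: show $\Ext^1_A(N, K) \neq 0$ for any non-projective $N$.

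This last step is where locality does the work. I would take a projective cover $0 \to \Omega N \to P_0 \to N \to 0$; since $N$ is non-projective, $\Omega N \neq 0$ and $\Omega N \subseteq \rad P_0$. Applying $\Hom_A(-, K)$ and using that every homomorphism $P_0 \to K$ kills $\rad P_0 \supseteq \Omega N$, the connecting homomorphism in the long exact sequence gives $\Ext^1_A(N, K) \cong \Hom_A(\Omega N, K)$. Finally, because $K$ is the unique simple module, the nonzero module $\Omega N$ surjects onto its top, a nonzero sum of copies of $K$, so $\Hom_A(\Omega N, K) \neq 0$. Combined with the isomorphism above this yields $\Ext^1_A(M,M) \neq 0$, hence $\phi_M = 1$ and $\Delta_A = 1$.

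The argument is almost entirely formal once $K$ is recognized as the unique simple module, so the only point I expect to need care is making sure the tensor identity of Lemma \ref{hopflemmas}(i)--(ii) lands exactly on the trivial module and not on some antipode twist, and that the identification $(M^*)^* \cong M$ is taken as an isomorphism of $A$-modules rather than merely of $K$-vector spaces. Everything downstream of that identification reduces to the standard fact that, over a local selfinjective algebra, $\Ext^1$ into the simple module detects non-projectivity.
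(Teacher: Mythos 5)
Your proof has the same skeleton as the paper's (reduce, via the Hopf tensor--Hom adjunction, to non-vanishing of an $\Ext^1$ against the trivial module, then exploit locality and selfinjectivity), but your adjunction runs in the opposite direction, and that is exactly where a genuine gap appears. The paper writes $M \cong K \otimes_k M$ and applies Lemma \ref{hopflemmas}(i) with the trivial module in the \emph{first} slot, getting $\Ext^1_A(M,M) \cong \Ext^1_A(K, \Hom_k(M,M)) \cong \Ext^1_A(K, M \otimes_k M^{*})$, which involves only single duals. You instead put $K$ in the third slot, which forces you through the double dual: $\Hom_k(M^{*},K) = (M^{*})^{*}$. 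The identification $(M^{*})^{*} \cong M$ as $A$-modules is precisely the step that is not available: for a finite dimensional Hopf algebra the canonical vector space isomorphism $m \mapsto \mathrm{ev}_m$ satisfies $a \cdot \mathrm{ev}_m = \mathrm{ev}_{S^{2}(a)m}$, so $(M^{*})^{*} \cong {}_{S^{2}}M$, the twist of $M$ by the square of the antipode, and this is isomorphic to $M$ for all $M$ essentially only when $S^{2}$ is inner. That holds for group algebras ($S^{2}=\mathrm{id}$) and more generally for involutory or cocommutative Hopf algebras, but nothing in the hypotheses supplies it for an arbitrary local Hopf algebra; recall that a finite dimensional Hopf algebra is symmetric precisely when it is unimodular and $S^{2}$ is inner, and whether local Hopf algebras are symmetric is listed as open in question vi) of the final section, so innerness of $S^{2}$ cannot be treated as a formality. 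Lemma \ref{hopflemmas}(ii) only rewrites $\Hom_k(M^{*},K)$ as the double dual; it does not identify it with $M$. You flagged this point yourself as ``needing care,'' but the care \emph{is} the missing proof: without it the isomorphism $\Ext^1_A(M,M) \cong \Ext^1_A(M \otimes_k M^{*}, K)$ is unjustified.

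The gap is repairable, and the rest of your argument is sound. Your concluding step --- $\Ext^1_A(N,K) \cong \Hom_A(\Omega N, K) \neq 0$ for non-projective $N$, via the projective cover, $\Omega N \subseteq \rad P_0$, and uniqueness of the simple module --- is correct; it is the projective-side mirror of the paper's argument, which instead observes that in a minimal injective coresolution of the non-projective module $M \otimes_k M^{*}$ over the local selfinjective algebra $A$ the first cosyzygy is non-zero, so $\Ext^1_A(K, M \otimes_k M^{*}) \cong \Hom_A(K,I^1) \neq 0$ by a socle argument. To close your proof, either switch to the paper's direction (trivial module in the first slot, only single duals needed), or keep your direction but replace $M^{*}$ by the left dual ${}^{*}M$, i.e.\ the $K$-dual with action twisted by $S^{-1}$: then $({}^{*}M)^{*} \cong M$ holds canonically as $A$-modules, the adjunction gives $\Ext^1_A(M \otimes_k {}^{*}M, K) \cong \Ext^1_A(M,M)$, and the rigidity axioms (evaluation and coevaluation for the right dual) exhibit $M$ as a retract of $M \otimes_k {}^{*}M \otimes_k M$, so $M \otimes_k {}^{*}M$ is non-projective whenever $M$ is; your final step then finishes the proof.
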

\begin{proof}
We have to show for every nonprojective module $M$, that we have $Ext_A^{1}(M,M) \neq 0$.
Using i) and ii) of the above lemma we have:
$Ext_A^{1}(M,M) \cong Ext_A^{1}(k \otimes_k M , M) \cong Ext_A^{1}(k,Hom_k(M,M)) \cong Ext_A^{1}(k,M \otimes_k M^{*})$.
Now using iii) of the above lemma, we see that $M \otimes_k M^{*}$ is not projective since $M$ is not projective. But since $A$ is local and selfinjective, the first term of a minimal injective coresolution of $M \otimes_k M^{*}$ has $A$ as a direct summand and thus $Ext_A^{1}(k,M \otimes_k M^{*}) \neq 0$.
\end{proof}
\begin{corollary}
Let $B=End_A(M)$, where $A$ is a local Hopf algebra and $M$ a non-projective generator of $mod-A$, then $B$ has dominant dimension equal to two.
\end{corollary}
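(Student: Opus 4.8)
The plan is to combine Theorem \ref{tachtheo} with Mueller's theorem, which together reduce the claim to a one-line computation. The first step is to check that Mueller's theorem actually applies to $B = \End_A(M)$, i.e.\ that $M$ is a generator-cogenerator and not merely a generator. Here I would invoke that $A$ is selfinjective (Lemma \ref{hopflemmas}(v)), so that the classes of indecomposable projective and indecomposable injective $A$-modules coincide; consequently $D(A)$ and $A$ lie in the same additive category, and any generator $M$ (one with $\add(A) \subseteq \add(M)$) automatically satisfies $\add(D(A)) \subseteq \add(M)$ and is therefore a generator-cogenerator. This legitimises the use of Mueller's theorem.

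Once that is in place, Mueller's theorem yields $\domdim(B) = \phi_M + 1$, so it remains only to show $\phi_M = 1$. Since $M$ is a non-projective module over the local nonsemisimple Hopf algebra $A$, Theorem \ref{tachtheo} (more precisely, the statement proved in its proof) gives $\Ext_A^{1}(M,M) \neq 0$. By the definition $\phi_M = \inf\{\, r \geq 1 \mid \Ext_A^{r}(M,M) \neq 0 \,\}$, the non-vanishing of $\Ext^1$ forces $\phi_M \leq 1$, while $\phi_M \geq 1$ holds by definition; hence $\phi_M = 1$. Substituting back gives $\domdim(B) = \phi_M + 1 = 2$, as claimed.

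There is essentially no analytic obstacle here: all the genuine work has already been carried out in Theorem \ref{tachtheo}, and the corollary is a clean translation through Mueller's theorem. The only point that genuinely requires attention — and the step I would be most careful to state explicitly — is the reduction from \emph{generator} to \emph{generator-cogenerator}, since Mueller's theorem is stated for generator-cogenerators; this is exactly where the selfinjectivity of the Hopf algebra $A$ is used. The hypothesis that $M$ be non-projective is what guarantees $\phi_M$ is finite and equal to $1$ rather than $\infty$, and hence that the dominant dimension is exactly (and not merely at least) two.
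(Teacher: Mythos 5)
Your proposal is correct and is exactly the argument the paper intends: the corollary is stated without proof as an immediate consequence of Theorem \ref{tachtheo} combined with Mueller's theorem, which is precisely your route. Your explicit observation that selfinjectivity of $A$ (Lemma \ref{hopflemmas}(v)) upgrades the generator $M$ to a generator-cogenerator, so that Mueller's theorem applies, is a worthwhile detail the paper leaves tacit, but it does not constitute a different approach.
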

\begin{example}\label{hopfexample}
To show that the previous theorem is really a generalisation of the result of Tachikawa, one has to find a finite dimensional local Hopf algebra that is not isomorphic to a group algebra.
Xingting Wang suggested to try example (A5) from theorem 1.1 in the paper \cite{NWW}. Here we give the proof that it is not isomorphic to a group algebra by calculating the quiver with relations isomorphic to the algebra and then calculating the beginning of a minimal projective resolution of the simple module. Fix an algebraically closed field $K$ of characteristic 2. The algebra $A$ is defined as $K<x,y,z>/(x^2,y^2,xy-yx,xz-zy,yz-zy-x,z^2-xy)$ (for the Hopf algebra structure see the above cited paper). We will describe this example by quiver and relations.
Note first that $A$ is local of dimension 8 over the field $K$ with basis $\{1,x,y,z,z^2,xz,yz,zy\}$ and the Jacobsonradical is the ideal generated by $x,y$ and $z$. Now we have to calculate the second power $J^2$ of the Jacobsonradical: It contains $x$, since $x=yz-zy \in J^2$. Since $A$ is not commutative, its quiver can not have just one loop. Thus the dimension of $J^2$ is at most 5. It is clear that $J^2$ contains every basis element expect possibly $y$ and $z$. Thus, since the dimension of $J^2$ is at most 5, $J^2$ has basis $x,z^2,xz,yz,zy$. We will now show that the quiver algebra of $A$ is equal to $k<a,b>/(a^2,b^2-aba)$. Clearly $k<a,b>$ maps onto $A$ by a map $f$, with $f(a)=y$ and $f(b)=z$. Note that $(a^2,b^2-aba)$ is contained in the Kernel of $f$, since $y^2=0$ and $z^2-yzy=z^2-(zy+x)y=z^2-xy=0$. Thus there is a surjective map $\hat{f}:k<a,b>/(a^2,b^2-aba) \rightarrow A$ induced by $f$. But since $k<a,b>/(a^2,b^2-aba)$ also has dimension 8, that is in fact an isomorphism. \newline
Now we show that $A=k<a,b>/(a^2,b^2-aba)$(we will identify $A$ now with $k<a,b>/(a^2,b^2-aba)$)  is not isomorphic to a group algebra. Since $A$ has dimension 8 and is not commutative, there are only 2 candidates of group algebras, that could be isomorphic to $A$: The group algebra of the dihedral group of order 8 and the group algebra of the Quaternion group.
We will now calculate the beginning of a projective resolution of the simple module $S_1$ of $A$. Note that $A$ has basis $\{1,a,b,ab,ba,aba=b^2,bab,baba=b^3=abab \}$ and thus Loewy length $4$. It is clear that $\Omega^{1}(S_1)$ equals the radical $J$ of the algebra. Since $J/J^2=<\overline{a},\overline{b}>$, the projective cover $f_1 : A^2 \rightarrow J$ is given by $f_1(x_1,x_2)=ax_1+bx_2$.
Then $\Omega^{2}(S_1)=ker(f_1)= \newline \langle (a,0),(ab,0),(aba,0),(baba,0),(0,ba),(0,bab),(0,baba),(bab,aba),(ba,b) \rangle $ is of dimension 9. \newline Top($\Omega^{2}(S_2))= \langle \overline{(a,0)},\overline{(ba,b)} \rangle$ and thus the projective Cover of $\Omega^{2}(S_1)$ is of the form $f_2 : A^2 \rightarrow \Omega^{2}(S_1)$ with $f_2(x_1,x_2)=(a,0)x_1+(ba,b)x_2$. \newline Then $\Omega^{3}(S_1)= \langle (a,0),(ab,0),(aba,0),(abab,0),(0,b^3),(0,bab),(bab-ba) \rangle$ and \newline Top($\Omega^{3}(S_1))= \langle \overline{(a,0)},\overline{(bab,-ba)}) \rangle$.
Thus the projective Cover of $\Omega^{3}(S_1)$ must have the form $f_3: A^2 \rightarrow \Omega^{3}(S_1)$ and thus $\Omega^{4}(S_1)$ has dimension 9.
The dimension of $\Omega^{4}(S_1)$ is the crucial information that we need to distinguish $A$ from group algebras of dimension 8 over the field. Let $B$ be the group algebra of the dihedral of order 8 over $K$ with simple module $S_2$. Then by \cite{Ben2} chapter 5.13., $\Omega^{4}(S_2)$ has dimension 17 and thus $A$ is not isomorphic to $B$. Let $C$ be the group algebra of the quaternion group of order 8 over $K$ with simple module $S_3$. Then this algebra is 4-periodic (see for example \cite{Erd}) and thus $\Omega^{4}(S_3) \cong S_3$ and $A$ is not isomorphic to $C$. This shows that $A$ is not isomorphic to any group algebra.

\end{example}
\begin{theorem}
For a group algebra $kG$ over an algebraic closed field of characteristic $p$ we have: \newline
$\phi_M \leq \phi_k$, for every module $M$ such that p does not divide $dim(M)$.
\end{theorem}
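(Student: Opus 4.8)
The plan is to transport the whole problem to the trivial module $k$ by the tensor--hom manipulation already used in the proof of Theorem~\ref{tachtheo}, and then to exploit the numerical hypothesis $p \nmid \dim(M)$ to split off a copy of $\Ext_A^{i}(k,k)$ from $\Ext_A^{i}(M,M)$ as a direct summand. Since $\phi_M$ is defined as an infimum, it suffices to produce a single degree $i \le \phi_k$ with $\Ext_A^{i}(M,M)\neq 0$.

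First I would record, for every $i \ge 1$, the isomorphism
\[
\Ext_A^{i}(M,M) \cong \Ext_A^{i}(k\otimes_k M,\,M) \cong \Ext_A^{i}\bigl(k,\Hom_k(M,M)\bigr) \cong \Ext_A^{i}\bigl(k,\,M\otimes_k M^{*}\bigr),
\]
which follows exactly as in the computation in the proof of Theorem~\ref{tachtheo}, using parts i) and ii) of Lemma~\ref{hopflemmas} and the identification $\Hom_k(M,M)\cong M\otimes_k M^{*}$ of $A$-modules. Next, because $p\nmid\dim(M)$, part iv) of Lemma~\ref{hopflemmas} (applied after the flip isomorphism $M\otimes_k M^{*}\cong M^{*}\otimes_k M$ of $G$-modules) gives a decomposition $M\otimes_k M^{*}\cong k\oplus Q$ for some $A$-module $Q$. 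Additivity of $\Ext$ in the second variable then yields, for all $i\ge 1$,
\[
\Ext_A^{i}(M,M)\cong \Ext_A^{i}(k,k)\oplus \Ext_A^{i}(k,Q),
\]
so that $\Ext_A^{i}(k,k)$ is a direct summand of $\Ext_A^{i}(M,M)$.

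To finish I would argue as follows. If $\phi_k=\infty$ the inequality $\phi_M\le\phi_k$ is automatic. Otherwise set $i=\phi_k$; then $\Ext_A^{\phi_k}(k,k)\neq 0$, and the summand decomposition forces $\Ext_A^{\phi_k}(M,M)\neq 0$, whence $\phi_M\le\phi_k$.

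The argument is short and essentially forced once the two displayed isomorphisms are in hand; the only point demanding care is the bookkeeping of the Hopf/$A$-module structures, namely checking that $\Hom_k(M,M)$ really is $M\otimes_k M^{*}$ as an $A$-module and that the summand $k$ provided by part iv) sits inside the second argument of $\Ext_A^{i}(k,-)$ after these identifications. I do not expect any genuine obstacle beyond this, since everything is inherited from the already established Lemma~\ref{hopflemmas}.
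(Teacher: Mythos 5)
Your proposal is correct and takes essentially the same route as the paper's own proof: the identical chain of isomorphisms $\Ext_A^{i}(M,M)\cong \Ext_A^{i}(k,M\otimes_k M^{*})$ via parts i) and ii) of Lemma \ref{hopflemmas}, the splitting $M\otimes_k M^{*}\cong k\oplus N$ from part iv) using $p\nmid\dim(M)$, and the conclusion by additivity of $\Ext$ in the second argument. The only differences are cosmetic: you make explicit the flip $M\otimes_k M^{*}\cong M^{*}\otimes_k M$ needed to invoke part iv) and the trivial case $\phi_k=\infty$, both of which the paper leaves implicit.
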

\begin{proof}
Since p does not divide $dim(M)$, we can use iv) of \ref{hopflemmas} and we can write $M \otimes_k M^{*}=k \oplus_A N$, with another module $N$.
Using \ref{hopflemmas}, one obtains: $Ext_A^{i}(M,M) \cong Ext_A^{i}(M \otimes_k k , M) \cong Ext_A^{i}(k,Hom_k(M,M)) \newline \cong Ext_A^{i}(k,M \otimes_k M^{*})=Ext_A^{i}(k,k)\oplus Ext_A^{}(k,N)$
Thus if we have $Ext^{r}(k,k) \neq 0$ for an $r \geq 1$, then we also have $Ext_A^{r}(M,M) \neq 0$.
\end{proof}
\begin{theorem}
For a group algebra $kG$ of finite representation type and a block $B$ we have: \newline
$\Delta_{B}=\phi_S=2s-1$, where s is the number of simple modules in $B$ and S is an arbitrary simple module with uniserial projective cover of $B$.
\end{theorem}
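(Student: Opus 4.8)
The plan is to invoke the structure theory of blocks of finite representation type. Such a block $B$ has cyclic defect group and is therefore a Brauer tree algebra, whose tree has exactly $s$ edges, so that $s$ equals the number of simple $B$-modules; see \cite{Erd} for this classification and the relevant background. Since $B$ is a block of a group algebra it is symmetric, hence selfinjective, so the invariant to compute is $\Delta_B=\sup\{\phi_M\mid M \text{ nonprojective}\}$, and the target value is $2s-1$.

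First I would compute $\Delta_B$. By the preceding theorem that $\Delta$ is invariant under derived equivalence of selfinjective algebras, and by Rickard's theorem that all Brauer tree algebras with $s$ edges and a fixed exceptional multiplicity are derived equivalent to the corresponding Brauer star, it suffices to compute $\Delta$ for the star. The star is a symmetric Nakayama algebra $N$ with $s$ simple modules in which every indecomposable projective is uniserial of length $L=ms+1$, where $m$ is the exceptional multiplicity. Writing $M_{a,k}=e_aA/e_aJ^k$ for the indecomposables, the syzygy formulas for selfinjective Nakayama algebras from \cite{Mar} give $\Omega^2 M_{a,k}\cong M_{a+1,k}$, hence $\Omega^{2j}M_{a,k}\cong M_{a+j,k}$ and $\Omega^{2j+1}M_{a,k}\cong M_{a+j+k,\,L-k}$, all indices modulo $s$. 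Using $\Ext^r(M,M)\cong\underline{\Hom}(\Omega^rM,M)$ together with the elementary criterion for when a homomorphism between uniserial modules is nonzero and when it factors through a projective, I would first treat the simple modules: for $S_i=M_{i,1}$ the displayed formulas show that $\top(\Omega^rS_i)\neq S_i$ for all $1\le r\le 2s-2$, so $\Ext^r(S_i,S_i)=0$ there, while at $r=2s-1$ one obtains $\Omega^{2s-1}S_i\cong M_{i,L-1}$ and the map $M_{i,L-1}\twoheadrightarrow S_i$ onto the top does not factor through a projective, because any map into $P_i$ has image inside $\rad(P_i)$; thus $\phi_{S_i}=2s-1$. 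The second, more laborious half is the upper bound $\phi_{M_{a,k}}\le 2s-1$ for every indecomposable, obtained by tracking, again via the syzygy formulas, that $\underline{\Hom}(\Omega^rM_{a,k},M_{a,k})$ is forced to be nonzero for some $1\le r\le 2s-1$. Together these give $\Delta_N=2s-1$, and hence $\Delta_B=2s-1$.

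It then remains to identify $\phi_S$ for a simple $B$-module $S$ whose projective cover $P_S$ is uniserial (which happens exactly when one endpoint of the edge labelled by $S$ contributes a trivial walk, e.g.\ a non-exceptional leaf). Here I would argue directly in $B$ rather than in the star, since $\phi$ of a fixed module is not a derived or stable invariant. Because $S$ is nonprojective, the bound $\phi_S\le\Delta_B=2s-1$ is automatic; therefore it suffices to prove the vanishing $\Ext^r_B(S,S)=0$ for all $1\le r\le 2s-2$, and then $\phi_S=2s-1$ follows immediately. To get this vanishing I would compute the minimal projective resolution of $S$: since $P_S$ is uniserial, the low syzygies $\Omega^r(S)$ are uniserial and are governed by Green's walk around the Brauer tree, and $S$ does not reappear in the top of $\Omega^r(S)$ in a non-factoring way until the walk has traversed the whole tree, which happens precisely at $r=2s-1$.

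The main obstacle I expect is the upper bound in the star, namely showing that no module beats the simple modules. Establishing $\phi_{M_{a,k}}\le 2s-1$ uniformly requires the precise description of when a homomorphism between uniserial $N$-modules factors through a projective, equivalently extends along the injective envelope of its source, followed by a case-free argument that some stable hom $\underline{\Hom}(\Omega^rM_{a,k},M_{a,k})$ is nonzero within the first $2s-1$ syzygies; this bookkeeping, though elementary, is the technical heart. The analogous bookkeeping for the syzygy walk of $S$ in a general Brauer tree, confirming that $S$ first returns to the top at step $2s-1$, is the other point that needs care, and is exactly where the uniseriality of $P_S$ is essential.
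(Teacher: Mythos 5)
Your strategy is sound and it is genuinely different from the paper's proof, which is a three-line reduction: the paper quotes \cite{Mar} for the symmetric Nakayama (= Brauer star) case and then quotes \cite{ChaMar}, Section 2.4, for a \emph{stable} equivalence between a representation-finite block $B$ and a symmetric Nakayama algebra which sends a simple module with uniserial projective cover to a simple module; since for selfinjective algebras a stable equivalence $F$ commutes with $\Omega$ and induces isomorphisms on stable Hom spaces, both $\Delta$ and $\phi$ of matched modules transport at once, i.e.\ $\Ext^r_B(F(M),F(M))\cong \Ext^r_A(M,M)$ for all $r\geq 1$, so $\phi_{F(M)}=\phi_M$. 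You instead get $\Delta_B=\Delta_N$ via Rickard's derived equivalence to the star plus the paper's invariance theorem, redo the Nakayama computation by hand, and treat $\phi_S$ by a separate Green's-walk argument inside $B$ itself. Two remarks on this. First, your stated reason for working directly in $B$ --- that ``$\phi$ of a fixed module is not a derived or stable invariant'' --- is not quite right: as just noted, $\phi$ \emph{is} preserved along a stable equivalence; the genuine issue is only that one must identify the image $F(S)$, and that identification is exactly what the citation of \cite{ChaMar} supplies and what your Green's-walk detour is designed to avoid. Second, the two steps you defer as ``bookkeeping'' are the real mathematical content of your route: the uniform upper bound $\phi_{M_{a,k}}\leq 2s-1$ over the star is the main computation of \cite{Mar} (which you could simply cite, as the paper does), and the Brauer-tree claim needs Green's theorem together with the observation that the walk visits a leaf edge at two cyclically adjacent steps, so that $P_S$ first reappears in the minimal resolution of $S$ at step $2s-1$; this gives $\Ext^r_B(S,S)=0$ for $1\leq r\leq 2s-2$, while at $r=2s-1$ periodicity $\Omega^{2s}S\cong S$ yields $\Omega^{2s-1}S\cong P_S/\soc(P_S)$, whose projection onto $S$ cannot factor through a projective by your dimension argument. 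Your partial computation for the simples of the star is correct, and each deferred step can be carried out, so this is a viable, self-contained alternative; what it buys is independence from the specific result of \cite{ChaMar}, at the cost of reproving \cite{Mar} and adding the Brauer-tree analysis that the paper's stable-equivalence transport makes unnecessary.
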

\begin{proof}
In \cite{Mar} we proved the result in case $B$ is a symmetric Nakayama algebra. But in general such $B$ is stable equivalent to a symmetric Nakayama algebra and so we can use that $\Delta_B$ is invariant under stable equivalence to get the result, since by \cite{ChaMar} section 2.4. there is a stable equivalence between a general representation-finite block and a symmetric Nakayama algebra sending a simple module with uniserial projective cover to an arbitrary simple module of the symmetric Nakayama algebra.
\end{proof}
\section{Open questions and comments}
This article motivated the following questions:
For the first two questions let $A=kG$ be a group algebra and $B$ a block of $A$, but note that those questions might also be interesting for a general selfinjective $B$.
\begin{enumerate}
\item[i)] Is it true that $\Delta_B = max \{\phi_S \mid S $ a simple $B-$module$ \}$? This hold for p-groups (since they are local Hopf algebras) and for group algebras of finite representation type as we have shown in this article. We remark that in forthcoming work we show that this also holds for any symmetric representation-finite algebra. 
\item[ii)] Is it true that $max \{\phi_S \mid S$ a simple $B$-module $\}\leq 2s-1$, where $s$ is the number of simple modules of $B$? Again this is true for p-groups and blocks such that $B$ has finite representation type. In forthcoming work we show that this also holds for any symmetric representation-finite algebra. We also tested this for several other blocks of group algebras of the symmetric and alternating groups.
\item[iii)] Does $Ext^{1}(M,M) \neq 0$ hold for all nonprojective modules $M$ of a symmetric (or even just selfinjective) local algebra? In \cite{Hos} theorem 3.4., it was shown that the answer is yes in case the selfinjective local algebra has radical cubed zero and in this article we showed it for local Hopf algebras. Of course one can ask this question in a more general way: Characterise the selfinjective local algebras with the property that $Ext^{1}(M,M) \neq 0$ for any non-projective module $M$. We mention the following special case of this question: Given a local selfinjective algebra and a module $M$ with simple socle. Is $Ext^{1}(M,M) \neq 0$? This would imply for example that all 1-quasi hereditary algebras have dominant dimension exactly two.             
\item[iv)] Let $A$ be a gendo-symmetric algebra with a maximal 1-orthogonal object or having representation dimension at most 3. Is there a bound on $o_1(A)$ depending on the number of simples of $A$? Note that such a bound would imply Yamagata's conjecture for this class of algebras.
\item[v)] Does for any finite dimensional (gendo-symmetric) algebra $A$ exist a $k \geq 1$ with $o_k(A) < \infty$? Note that in the gendo-symmetric case a positive answer would imply the Nakayama conjecture or equivalently the Tachikawa conjecture for symmetric algebras as we have shown in this article.
\item[vi)] Is there a local Hopf algebra that is not symmetric? The author is not an expert in Hopf algebras but asked several people, who work on that area. No one could provide an example of a local non-symmetric Hopf algebra yet. You can find this question also on mathoverflow: http://mathoverflow.net/questions/202339/examples-of-local-nonsemisimple-nonsymmetric-hopf-algebras.
\end{enumerate}
We remark that in forthcoming work we prove another special case of Yamagata's conjecture by giving explicit bounds for the dominant dimension of Morita algebras $A$ such that $eAe$ is representation-finite, where $eA$ is the minimal faithful projective-injective $A$-module.


\begin{thebibliography}{Gus}
\bibitem[Abr]{Abr} Abrar, Muhammad : {\it Dominant dimensions of finite dimensional algebras.} \url{http://elib.uni-stuttgart.de/opus/volltexte/2012/7548/pdf/Abrar.pdf}
\noindent PhD thesis, Stuttgart 2012.
\bibitem[AnFul]{AnFul} Anderson, Frank W.; Fuller, Kent R.: {\it Rings and Categories of Modules.} Graduate Texts in Mathematics, Volume 13, Springer-Verlag, 1992. 
\bibitem[ARS]{ARS} Auslander, Maurice; Reiten, Idun; Smalo, Sverre: {\it Representation Theory of Artin Algebras.}
Cambridge Studies in Advanced Mathematics, Volume 36, Cambridge University Press, 1997.
\bibitem[ASS]{ASS} Assem, Ibrahim; Simson, Daniel; Skowronski, Andrzej: {\it Elements of the Representation Theory of Associative Algebras, Volume 1: Representation-Infinite Tilted Algebras.}
\noindent London Mathematical Society Student Texts, Volume 72, 2007. 
\bibitem[Ben]{Ben} Benson, David J.: {\it Representations and cohomology I: Basic representation theory of finite groups and associative algebras.} Cambridge Studies in Advanced Mathematics, Volume 30, Cambridge University Press, 1991.
\bibitem[Ben2]{Ben2} Benson, David J.: {\it Representations and cohomology. II. Cohomology of groups and modules.} Cambridge Studies in Advanced Mathematics, 31. Cambridge University Press, Cambridge, 1998. 
\bibitem[ChaMar]{ChaMar} Chan, Aaron; Marczinzik, Ren\'e: {\it On representation-finite gendo-symmetric biserial algebras.} http://arxiv.org/abs/1607.05965.
\bibitem[CheKoe]{CheKoe} Chen, Hongxing; Koenig, Steffen: {\it Ortho-symmetric modules, Gorenstein algebras and derived equivalences.} IMRN electronically published. doi:10.1093/imrn/rnv368.
\bibitem[Erd]{Erd} Erdmann, Karin: {\it Blocks of tame representation type and related algebras.} Lecture Notes in Mathematics, 1428. Springer-Verlag, Berlin, 1990.
\bibitem[FanKoe]{FanKoe} Fang, Ming; Koenig, Steffen: {\it Gendo-symmetric algebras, canonical comultiplication, bar cocomplex and dominant dimension.} Trans. Amer. Math. Soc. 368 (2016), no. 7, 5037-5055. 
\bibitem[FanKoe2]{FanKoe2} Fang, Ming; Koenig, Steffen: {\it Endomorphism algebras of generators over symmetric algebras.}  J. Algebra 332 (2011), 428-433.
\bibitem[HIO]{HIO}  Herschend, Martin; Iyama, Osamu; Oppermann, Steffen: {\it n-representation infinite algebras.} Adv. Math. 252 (2014), 292-342.
\bibitem[Hos]{Hos} Hoshino, Mitsuo: {\it Modules without self-extensions and Nakayama's conjecture. Arch. Math. (Basel) 43 (1984), no. 6, 493-500.}
\bibitem[Hum]{Hum} Humphreys, James: {\it Representations of semisimple Lie algebras in the BGG category $\mathcal{O}$.} Graduate Studies in Mathematics, 94. American Mathematical Society, Providence, RI, 2008. xvi+289 pp.
\bibitem[IgTo]{IgTo} Igusa, Kiyoshi; Todorov, Gordana: {\it On the finitistic global dimension conjecture for Artin algebras.} Representations of algebras and related topics, 201-204, 
Fields Inst. Commun., 45, Amer. Math. Soc., Providence, RI, 2005. 
\bibitem[Iya]{Iya} Iyama, Osamu: {\it Higher-dimensional Auslander-Reiten theory on maximal orthogonal subcategories.} Advances in Mathematics, Volume 210, pages 22-50, 2007.
\bibitem[Iya2]{Iya2} Iyama, Osamu: {\it Auslander correspondence.} Adv. Math. 210 (2007), no. 1, 51-82. 
\bibitem[KerYam]{KerYam} Kerner, Otto; Yamagata, Kunio: {\it Morita algebras.}
\noindent Journal of Algebra, Volume 382, pages 185-202, 2013.
\bibitem[KSX]{KSX} Koenig, Steffen; Slung$\mathring{a}$rd, Inger Heidi;Xi, Changchang: {\it Double centralizer properties, dominant dimension, and tilting modules.} J. Algebra 240 (2001), no. 1, 393-412.
\bibitem[Mar]{Mar} Marczinzik, Ren\'e: {\it Upper bounds for the dominant dimension of Nakayama and related algebras.} http://arxiv.org/abs/1605.09634.
\bibitem[Mue]{Mue} Mueller, Bruno: {\it The classification of algebras by dominant dimension.} Canad. J. Math. 20 1968 398-409. 
\bibitem[Nak]{Nak} Nakayama, Tadasi: {\it On algebras with complete homology.}
Abh. Math. Sem. Univ. Hamburg 22 1958 300-307. 
\bibitem[NWW]{NWW}  Nguyen, Van; Wang, Linhong; Wang, Xingting: {\it Classification of connected Hopf algebras of dimension $p^3$} I. J. Algebra 424 (2015), 473-505.
\bibitem[Pu]{Pu} Pucinskaite, Daiva: {\it Quasi-hereditary algebras via generator-cogenerators of local self-injective algebras and transfer of Ringel duality.} 
Math. Z. 279 (2015), no. 3-4, 641-668. 
\bibitem[SkoYam]{SkoYam} Skowronski, Andrzej; Yamagata, Kunio: {\it Frobenius Algebras I: Basic Representation Theory.} EMS Textbooks in Mathematics, 2011.
\bibitem[Ta]{Ta} Tachikawa, Hiroyuki: {\it Quasi-Frobenius Rings and Generalizations: QF-3 and QF-1 Rings (Lecture Notes in Mathematics 351) } Springer; 1973.
\bibitem[Yam]{Yam} Yamagata, Kunio: {\it Frobenius Algebras} 
\noindent {Hazewinkel, M. (editor): Handbook of Algebra, North-Holland, Amsterdam, Volume I, pages 841-887, 1996.}
\end{thebibliography}
\end{document}